\theoremstyle{thmstyleone}%
\newtheorem{theorem}{Theorem}
\newtheorem{assumption}[theorem]{Assumption}
\newtheorem{lemma}[theorem]{Lemma}
\theoremstyle{thmstyletwo}%
\newtheorem{example}{Example}%
\newtheorem{remark}{Remark}%
\theoremstyle{thmstylethree}%
\newtheorem{definition}{Definition}%
\begin{document}

\title[Strong convergence of the adaptive Milstein method for nonlinear SDEPCAs]{Strong convergence of the adaptive Milstein method for nonlinear stochastic differential equations with piecewise continuous arguments}

\author[1,2]{\fnm{Yuhang} \sur{Zhang}}\email{zhangyh@hit.edu.cn}

\author*[3]{\fnm{Minghui} \sur{Song}}\email{songmh@hit.edu.cn}

\author[4,2]{\fnm{Jiaqi} \sur{Zhu}}\email{zhujq@hit.edu.cn}

\affil[1]{\orgdiv{School of Astronautics}, \orgname{Harbin Institute of Technology}, \orgaddress{\city{Harbin},\postcode{150080}, \country{China}}}

\affil[2]{\orgdiv{Zhengzhou Research Institute}, \orgname{Harbin Institute of Technology}, \orgaddress{\city{Zhengzhou}, \postcode{450000}, \country{China}}}

\affil[3]{\orgdiv{School of Mathematics}, \orgname{Harbin Institute of Technology}, \orgaddress{\city{Harbin},  \postcode{150001}, \country{China}}}

\affil[4]{\orgdiv{National Key Laboratory of Science and Technology on Advanced Composites in Special Environments}, \orgname{Harbin Institute of Technology}, \orgaddress{\city{Harbin}, \postcode{150080}, \country{China}}}


\abstract{In this work, an adaptive time-stepping Milstein method is constructed for stochastic differential equations with piecewise continuous arguments (SDEPCAs), where the drift is one-sided Lipschitz continuous and the diffusion does not impose the commutativity condition. It is widely recognized that explicit Euler or Milstein methods may blow up when the system exhibits superlinear growth, and modifications are needed. Hence we propose an adaptive variant to deal with the case of superlinear growth drift coefficient. To the best of our knowledge, this is the first work to develop a numerical method with variable step sizes for nonlinear SDEPCAs. It is proven that the adaptive Milstein method is strongly convergent in the sense of $\mathcal{L}_p, p\ge 2$, and the convergence rate is optimal, which is consistent with the order of the explicit Milstein scheme with globally Lipschitz coefficients. Finally, several numerical experiments are presented to support the theoretical analysis.}

\keywords{stochastic differential equations, Adaptive time-stepping, Milstein method, $\mathcal{L}_p$-convergence, Convergence rate}

\maketitle

\section{Introduction}\label{Introduction}
In this paper, we focus on the numerical approximation for the $n$-dim stochastic differential equations with piecewise continuous arguments (SDEPCAs), which are given by
\begin{align*}
{\rm d}x(t)=f(x(t),x([t])){\rm d}t+ g(x(t),x([t])){\rm d}B(t), ~\forall t\in[0,T]
\end{align*}
where $T>0$ and the initial value is $x(0)=x_0\in\mathbb{R}^n$, with $[\cdot]$ denoting the greatest-integer function. The study of differential equations with piecewise continuous arguments was initiated in
\cite{Shah1983,Wiener1983,Cooke1984,Wiener1984}, and the general theory and basic results have been systematically introduced in Wiener's book \cite{Wienerbook}. SDEPCAs extend these models by incorporating the effect of noise and have been widely applied in various fields, including neural networks (\cite{XiaoaiLi2014}) and control theory, especially in the study of stabilization for stochastic differential equations (SDEs) via feedback control based on discrete-time state observations (see, e.g., \cite{X.R.Mao2013,LIU2020,XiaoyueLi2020SIAM,WuHao2020} and references therein). To be specific, the general form of stochastic controlled systems is
\begin{align*}
{\rm d}x(t)=f(t,x(t),u(t)){\rm d}t+g(t,x(t),u(t)){\rm d}B(t),
\end{align*}
where $u$ is the control input, while the control problem of stochastic systems by discrete-time state feedback need to establish the stability of 
  \begin{align*}
{\rm d}x(t)=f(t,x(t),u(x([t/\tau]\tau)){\rm d}t+g(t,x(t),u(x([t/\tau]\tau))){\rm d}B(t),
\end{align*}
which is a typical SDEPCA. However, most such equations do not have explicit solutions, making numerical methods essential for understanding their dynamics. 

Some recent studies have explored numerical approximations for SDEPCAs, with most focusing on Euler and Euler-type methods using uniform step sizes, such as the split-step theta method (\cite{Lu2017}), the tamed Euler method (\cite{yuhZhang2022}), and the truncated Euler method (\cite{geng2021}), among others (see, e.g. \cite{Marija2016,XIE2019,XIE20201}). The convergence orders of these methods typically do not exceed one-half. However, in order to achieve the required accuracy in many real-world problems, higher-order numerical methods are also an important part of the numerical analysis. To the best of our knowledge, \cite{yuhZhang2023numerical_algorithms} is the only work that has constructed an explicit Milstein method for SDEPCAs, proving a convergence rate of 1 under global Lipschitz condition. However, \cite{Martin2011} pointed out that Euler and Milstein methods with uniform step sizes cannot converge for a large class of SDEs with superlinearly growing coefficients. Therefore, our goal in this work is to propose an adaptive variant of the explicit Milstein method that achieves convergence of order 1 in the sense of $\mathcal{L}_p$ to nonlinear SDEPCAs. 

Adaptive time-stepping methods have gained attention in recent years for solving nonlinear SDEs (for example, \cite{Lamba2007,Lord2018,FangWei2020,Reisinger2022} and so on), the idea is to avoid the oscillatory superlinearly growth of numerical solutions by dynamically adjusting the step size rather than modifying the coefficients directly. 

Next, we outline our contributions in comparison to existing literature.

\begin{itemize}
\item We propose an explicit Milstein method for SDEPCAs, where the drift can grow superlinearly and the diffusion does not  satisfy the commutativity condition. We prove that the method achieves a strong convergence rate of 1. To the best of our knowledge, \cite{yuhZhang2023numerical_algorithms} is the only work that has constructed a first-order method for SDEPCAs, but it requires the coefficients to satisfy global Lipschitz conditions and assumes commutative diffusion. For SDEPCAs with non-global Lipschitz coefficients, all existing numerical methods converge with an order of $1/2$. 
\item To the best of our knowledge, this is the first work to develop an adaptive time-stepping method for SDEPCAs. In contrast, all numerical methods for SDEPCAs in existing literature use a uniform step size (see, e.g., \cite{XIE20201,yuhZhang2022,MR4489747} and references therein). In fact, most current research focuses on adaptive methods for SDEs without delay (see \cite{FangWei2020,Reisinger2022,Conall2019arxiv} and references there in). There is limited work on adaptive method for SDEs with delay, to our knowledge, \cite{MR3339209} is the only one, but it focuses on weak convergence for SDEs with constant delay under global Lipschitz conditions. The SDEPCAs we study here are a class of SDEs with variable delay, hence, this work can further enriches the field of adaptive methods. 

\item An adaptive time-stepping Milstein method has been shown in \cite{Conall2019arxiv} to be strongly convergent with an order of 1 for SDEs without delay, but it relies on a backstop scheme, i.e., at time $t_n$ , a single-step backstop method is used if the step size $h_n$ is outside the specified range. In this work, we develop the adaptive variant following the ideas of \cite{FangWei2020}, without using the backstop scheme. Additionally, the convergence rate in \cite{Conall2019arxiv} is proven in the sense of $\mathcal{L}_2$, while we prove strong convergence with an order of 1 in the sense of $\mathcal{L}_p$ for $p\ge 2$.
\end{itemize}

The remainder of this article is organized as follows: In Section 2, we introduce notations and settings. In Section 3, we present our adaptive Milstein scheme and prove its boundedness, with the time-stepping strategy inspired by \cite{FangWei2020}. The main convergence result is given in Section 4. Finally, in Section 5, we present some numerical experiments to validate the theoretical results.

\section{Notations and preliminaries}\label{Notations and preliminaries}

Throughout this paper, unless otherwise specified, we use the following notations. For two real numbers $a$ and $b$, we denote $a\vee b$ and $a\wedge b$ as $\max\left\{a,b\right\}$ and $\min\left\{a,b\right\}$, respectively. $\mathbb{N}$ and $\mathbb{N}_+$ represent the sets $\{0,1,2,\dots\}$ and $\{1,2,3,\dots\}$, respectively. $\vert x\vert$ denotes the Euclidean vector norm, and $\langle x,y\rangle$ represents the inner product of vectors $x$ and $y$. If $A$ is a vector or matrix, its transpose is denoted by $A^{\rm T}$. For a matrix $A\in\mathbb{R}^{n\times d}$, its Frobenius norm is defined as $\|A\|\triangleq\sqrt{\sum_{i=1}^n\sum_{j=1}^dA_{ij}^2}$. We also use $\|\cdot\|_{T_3}$ to denote the tensor norm of a rank-3 tensor in $\mathbb{R}^{n\times d\times m}$. 

For all $x,y\in\mathbb{R}^n$ and any $\phi(x,y)\in C^{2,0}(\mathbb{R}^n\times\mathbb{R}^n,\mathbb{R}^n)$, we denote $D_x\phi(x,y)$ as the matrix $\left(\frac{\partial \phi_i(x,y)}{\partial x_j}\right)_{n\times n}$. The second derivative of $\phi(x,y)$ with respect to $x$ forms a rank-3 tensor, denoted by $D_{xx}\phi(x,y)$.

Moreover, let $(\Omega ,\mathcal{F},\left\{\mathcal{F}_t\right\}_{t\ge 0},\mathbb{P})$ be a complete probability space with a filtration $\left\{\mathcal{F}_t\right\}_{t\ge 0}$ satisfying the usual conditions (i.e., it is right continuous and $\mathcal{F}_0$ contains all $\mathbb{P}$-null sets), and let $\mathbb{E}$ denote the expectation corresponding to $\mathbb{P}$.  Let $B(t)=(B^1(t),\dots,B^d(t))^{\rm T}$ be a d-dimensional Brownian motion. For any $T>0$, we consider the following SDEPCA
\begin{align}\label{SDEPCA}
{\rm d}x(t)=f(x(t),x([t])){\rm d}t+ \sum_{j=1}^d g_j(x(t),x([t])){\rm d}B^j(t)
\end{align}
on $t\in [0,T]$ with initial data $x(0)=x_0\in\mathbb{R}^n$, where $x(t)\in\mathbb{R}^n$, $f:\mathbb{R}^n\times \mathbb{R}^n\to \mathbb{R}^n , g_j:\mathbb{R}^n\times \mathbb{R}^n\to \mathbb{R}^{n}$, $j=1,2,\dots,d$, 



The definition of the solution of SDEPCAs can be found in \cite{songmh2012}.  In this work, we assume that the coefficients of \eqref{SDEPCA} satisfy the following conditions.
\begin{assumption}\label{Local Lipschitz condition}
For every positive number $R$, there exists a positive constant $L_R$ such that 
\begin{align*}
\vert f(x, y)-f(\bar{x},\bar{y})\vert \vee\vert g_j(x,y)-g_j(\bar{x},\bar{y})\vert  \le L_R(\vert x-\bar{x}\vert  +\vert y-\bar{y}\vert)
\end{align*} 
for those $x, y,\bar{x},\bar{y}\in\mathbb{R}^n$ with $\vert x\vert\vee\vert y\vert\vee \vert  \bar{x}\vert\vee \vert \bar{y}\vert\le R$ and all $j=1,\dots, d$.
\end{assumption}
\begin{assumption}\label{f condition}
There exists a constant $L>0$ such that 
\begin{align*}
\langle x,f(x, y)\rangle \le& L(1+|x|^2+|y|^2)
\end{align*} 
for all $x, y\in\mathbb{R}^n$.
\end{assumption}

\begin{assumption}\label{g condition}
There exist constant $K_1,K_2>0$ such that 
\begin{align*}
&\vert g_j(x, y)\vert \le K_1(1+|x|+|y|),\\
&\|D_{x} g_j(x, y)\| \le K_2
\end{align*} 
for all $x, y\in\mathbb{R}^n$ and $j=1,2,\dots,d$.
\end{assumption}

Under Assumption \ref{g condition}, with the help of the Cauchy-Schwarz inequality, it is easy to see that 
\begin{align}\label{D_xg_jg_r}
\vert D_{x} g_j(x, y)g_r(x,y)\vert \le \|D_{x} g_j(x, y)\|\vert g_r(x, y)\vert \le K(1+|x|+|y|)
\end{align} 
for any $j,r=1,2,\dots,d$ with $K=K_1K_2$.

According to the Theorem 3.1 in \cite{songmh2012}, we can obtain the following existence and uniqueness theorem.
\begin{lemma}\label{exact solu. bounded}
Under Assumptions \ref{Local Lipschitz condition}-\ref{g condition}, there is a unique global solution $x(t)$ to Eq.\eqref{SDEPCA} on $t\ge 0$ with initial data $x(0)=x_0$. Moreover, for any $p>0$, the solution has the property that
\begin{align*}
 \mathbb{E}|x(t)|^p<\infty, ~\forall t\ge 0.
 \end{align*}
\end{lemma}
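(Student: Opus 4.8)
The plan is to reconstruct the statement from Theorem 3.1 of \cite{songmh2012} by exploiting the structure of the piecewise continuous argument: on each interval $[m,m+1)$ with $m\in\mathbb{N}$ we have $[t]=m$, so $x([t])$ is the fixed $\mathcal{F}_m$-measurable random vector $x(m)$, and \eqref{SDEPCA} reduces there to the classical It\^o equation ${\rm d}x(t)=f(x(t),x(m))\,{\rm d}t+\sum_{j=1}^d g_j(x(t),x(m))\,{\rm d}B^j(t)$ with initial datum $x(m)$. The argument is then an induction over the unit intervals $[0,1],[1,2],\dots$: on each one we solve a standard SDE and propagate a finite $p$-th moment to the next integer node.

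\textbf{Step 1 (one-interval well-posedness and $\mathcal{L}_p$ bounds).} Fix $m$ and condition on $x(m)$. Assumption \ref{Local Lipschitz condition} gives local Lipschitz continuity of $f(\cdot,x(m))$ and of the $g_j(\cdot,x(m))$, hence a unique maximal strong solution on $[m,m+1)$ up to an explosion time $\theta_m$. To exclude explosion and bound moments, for $p\ge2$ take $V_p(x)=(1+|x|^2)^{p/2}$. In It\^o's formula for $V_p(x(t))$ the drift term is proportional to $(1+|x|^2)^{p/2-1}\langle x,f(x,x(m))\rangle$, bounded by $C(1+|x|^2+|x(m)|^2)(1+|x|^2)^{p/2-1}$ via Assumption \ref{f condition}, while the second-order and quadratic-variation terms are controlled through $\sum_{j}|g_j(x,x(m))|^2\le C(1+|x|^2+|x(m)|^2)$ from Assumption \ref{g condition}; Young's inequality then yields a Khasminskii-type bound of the form $\mathscr{L}V_p(x)\le C_p\big(1+V_p(x)+|x(m)|^p\big)$. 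Applying It\^o's formula up to $t\wedge\tau_R$ with $\tau_R=\inf\{t\ge m:|x(t)|\ge R\}$, taking expectations, and using Gronwall's inequality over the bounded interval $[m,m+1]$ gives a bound on $\mathbb{E}V_p(x(t\wedge\tau_R))$ that is uniform in $R$ and in $t\in[m,m+1]$ and depends only on $p$ and $\mathbb{E}|x(m)|^p$. Letting $R\to\infty$ with Fatou's lemma shows $\theta_m\ge m+1$ a.s. and $\sup_{m\le t\le m+1}\mathbb{E}|x(t)|^p\le C_p\big(1+\mathbb{E}|x(m)|^p\big)$.

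\textbf{Step 2 (gluing and induction).} Since $\mathbb{E}|x_0|^p=|x_0|^p<\infty$, Step 1 produces a unique solution on $[0,1]$ with $\mathbb{E}|x(1)|^p<\infty$; as $x(1)$ is $\mathcal{F}_1$-measurable the procedure repeats on $[1,2],[2,3],\dots$, the pieces matching at integer endpoints by path continuity. This yields the unique global solution on $[0,\infty)$, and by induction on $m$ one gets $\mathbb{E}|x(m)|^p<\infty$ for all $m\in\mathbb{N}$, hence $\sup_{0\le t\le T}\mathbb{E}|x(t)|^p<\infty$ for every $T>0$, i.e. $\mathbb{E}|x(t)|^p<\infty$ for all $t\ge0$. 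For $0<p<2$ the bound follows from the case $p=2$ by Jensen's inequality.

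\textbf{Main obstacle.} The only delicate point is arranging the stopping-time/Gronwall estimate of Step 1 so that it is simultaneously uniform in the localizing radius $R$ (needed to rule out explosion inside a unit interval) and transfers a finite $p$-th moment from $x(m)$ to $x(m+1)$ (needed to keep the induction going); once the freezing reduction to a classical SDE under the monotone-growth conditions of Assumptions \ref{f condition}--\ref{g condition} is made, the remaining computations are routine.
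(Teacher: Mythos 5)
Your argument is correct and is the standard one. Note, however, that the paper does not actually prove this lemma: it simply invokes Theorem 3.1 of the cited reference \cite{songmh2012}, and your interval-by-interval reduction (freezing $x([t])=x(m)$ on $[m,m+1)$, a Khasminskii-type Lyapunov estimate with $V_p(x)=(1+|x|^2)^{p/2}$ under Assumptions \ref{f condition}--\ref{g condition}, localization plus Gronwall and Fatou, then induction over integer nodes) is essentially the proof underlying that citation. The only point worth tightening is the phrase ``condition on $x(m)$'': since $x(m)$ is random, one should either treat the frozen equation as an SDE with $\mathcal{F}_m$-measurable random coefficients and localize jointly in $|x(t)|$ and $|x(m)|$ (so that the local Lipschitz constant $L_R$ of Assumption \ref{Local Lipschitz condition} applies), or argue on the events $\{|x(m)|\le R\}$; this is routine and does not affect the conclusion.
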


\section{The Milstein scheme}\label{Milstein scheme}

We will define the adaptive Milstein scheme for Eq.\eqref{SDEPCA} and consider its boundedness in this section. Adaptive time-stepping methods usually lead to a sequence of time steps $\{\Delta_k\}_{k\in\mathbb{N}}$, which yield a partition of the time interval $0=t_0<t_1<t_2<\cdots< T$, where we set $t_{k+1}=t_0+\sum_{i=0}^{k}\Delta_i, k\in\mathbb{N}$ with $t_0=0$. First, let us introduce the definition of $\{\Delta_k\}_{k\in\mathbb{N}}$: Write $X_k$ as the numerical approximation of $x(t)$ at $t=t_k, k\in\mathbb{N}$, we use $\Delta(X_k)$ to ensure the boundedness of the Milstein method when $\vert X_k\vert$ is large, $\Delta(\cdot)$ has to satisfy the following condition:

\begin{assumption}\label{step size}
Suppose that $\Delta: \mathbb{R}^n\to (0,T]$ denotes the adaptive time step function, which is continuous and strictly positive, and there exists a positive constant $\alpha$ such that for all $x,y\in\mathbb{R}^n$,
\begin{align}\label{step size equ.}
\langle x,f(x,y)\rangle+\frac{1}{2}\Delta(x)\vert f(x,y)\vert^2\le \alpha(1+|x|^2+|y|^2).
\end{align}
\end{assumption}
\begin{remark}
Under Assumption \ref{step size}, \eqref{step size equ.} also holds for any other step size $\Delta\le \Delta(x)$, i.e.,
\begin{align*}
\langle x,f(x,y)\rangle+\frac{1}{2}\Delta\vert f(x,y)\vert^2\le \alpha(1+|x|^2+|y|^2),~\forall \Delta\le \Delta(x).
\end{align*}
\end{remark}

Standard convergence analysis for numerical methods with a uniform time stepsize $\Delta$ considers the limit $\Delta\to0$, it obviously needs to be modified when using an adaptive time step. To this end, for any given $M\in\mathbb{N}_+$, define
\begin{align}\label{Delta_k}
\Delta_k=\min\left\{\frac{\Delta(X_k)}{M},[t_k]+1-t_k\right\},
\end{align}
then we will consider the limit $M\to \infty$ in the convergence analysis. 

 \begin{remark}
According to the definition of $\Delta(\cdot)$ and $\Delta_k$, it is easy to know that 
\begin{align*}
\Delta_k\le \frac{\Delta(X_k)}{M}\le \frac{T}{M}\quad{\rm and}\quad0<\Delta_k\le 1.
\end{align*}
 \end{remark}
 
 \begin{remark}
For any $t\in[0,T]$, $[t]$ is a nonnegative integer in Eq.\eqref{SDEPCA}, and the term $[t_k]+1-t_k$ in \eqref{Delta_k} is to ensure all positive integers can be the discretization times.
 \end{remark}
 
In order to ensure that the simulation on the interval $[0,T]$ can be completed in a finite number of time steps, we require an assumption on the lower bound of stepsizes:
\begin{assumption}\label{step size lower bound}
There exist positive constants $a,b$ and $q$ such that the adaptive time step function satisfies
\begin{align*}
\Delta(x)\ge (a|x|^q+b)^{-1}, ~\forall x\in\mathbb{R}^n.
\end{align*}
\end{assumption}

\begin{definition}[\cite{Lord2018,Maobook}]
Suppose that each member of the sequence $\{t_k\}_{k\in\mathbb{N}}$ is an $\mathcal{F}_t$-stopping time: i.e., $\{t_k \le t\}\in\mathcal{F}_t$ for all $t\ge 0$, where $\{\mathcal{F}_t\}_{t\ge0}$ is the natural filtration. We may then define a discrete-time filtration $\{\mathcal{F}_{t_k}\}_{k\in\mathbb{N}}$ by
\begin{align*}
\mathcal{F}_{t_k}=\{A\in \mathcal{F}: A\cap\{t_k \le t\}\in\mathcal{F}_{t} \text{~for all~} t\ge 0\},~k\in\mathbb{N}.
\end{align*}
\end{definition}


Next we define the adaptive Milstein method used in this paper for SDEPCA \eqref{SDEPCA}. For $x,y\in\mathbb{R}^n, j,r=1,2,\dots, d$, and $k\in\mathbb{N}$, define 
\begin{align*}
m_k=&\max\{m\in\mathbb{N}:t_m\le t_k,t_m\in\mathbb{N}\},\\
I_{rj}^{t_k,t}=&\int_{t_{k}}^{t}\int_{t_{k}}^u {\rm d}B^r(v){\rm d}B^j(u).
\end{align*}
For any $k\in\mathbb{N}$, the adaptive Milstein solution for Eq.\eqref{SDEPCA} is given by 
\begin{align}\label{discrete Milstein-1}
X_{k+1}=&X_k+ f\left(X_k,X_{m_k}\right)\Delta_k+ \sum_{j=1}^d g_j\left(X_k,X_{m_k}\right)\Delta B^j(t_{k+1})\notag\\
&+\sum_{j,r=1}^d D_xg_j\left(X_k,X_{m_k}\right)g_r\left(X_k,X_{m_k}\right)I_{rj}^{t_k,t_{k+1}}
\end{align}
with $X(0)=x(0)=x_0, \Delta B^j(t_{k+1})=B^j(t_{k+1})-B^j(t_k)$.   For $t\in[t_k,t_{k+1})$, define 
\begin{align}\label{continuous Milstein}
X(t)=&X_k+\int_{t_k}^t f(X_k,X_{m_k}) {\rm d}u+ \sum_{j=1}^d\int_{t_k}^t g_j(X_k,X_{m_k}) {\rm d}B^j(u)\notag\\
&+\sum_{j,r=1}^d D_xg_j(X_k,X_{m_k})g_r(X_k,X_{m_k})I_{rj}^{t_k,t}.
\end{align}
Let $\bar{X}(t)=X_k$, $\underline{t}=t_k, \forall t\in[t_k,t_{k+1})$, it follows
\begin{align*}
X(t)=&X_0+\int_{0}^t f(\bar{X}(u),\bar{X}([u])) {\rm d}u+ \sum_{j=1}^d\int_{0}^t g_j(\bar{X}(u),\bar{X}([u])) {\rm d}B^j(u)\notag\\
&+\sum_{j,r=1}^d \int_{0}^t D_xg_j(\bar{X}(u),\bar{X}([u]))g_r(\bar{X}(u),\bar{X}([u]))\Delta B^r(u){\rm d}B^j(u), \forall t\ge0
\end{align*}
from equation \eqref{continuous Milstein}, where $\Delta B^r(u)=B^r(u)-B^r(\underline{u})$. It is easy to see that $X(t_k)=\bar{X}(t_k)=X_k$.  For $t\in[t_k,t_{k+1}]$ and $r\neq j$, due to 
\begin{align*}
I_{rj}^{t_k,t}+I_{jr}^{t_k,t}=(B^j(t)-B^j(t_k))(B^r(t)-B^r(t_k))=\Delta B^j(t)\Delta B^r(t),
\end{align*}
the last terms in \eqref{discrete Milstein-1} and \eqref{continuous Milstein} can both be divided into
\begin{align}\label{I_rj^t_k,t}
&\sum_{j,r=1}^d D_xg_j(X_k,X_{m_k})g_r(X_k,X_{m_k}) I_{rj}^{t_k,t}\notag\\
&=\frac{1}{2}\sum_{j=1}^d D_xg_j(X_k,X_{m_k})g_j(X_k,X_{m_k})\Big((\Delta B^j(t))^2-(t-t_k)\Big)\notag\\
&\quad+\frac{1}{2}\sum_{\substack{j,r=1\\ j<r}}^d \Big(D_xg_j(X_k,X_{m_k})g_r(X_k,X_{m_k})+D_xg_r(X_k,X_{m_k})g_j(X_k,X_{m_k})\Big)\Delta B^j(t)\Delta B^r(t)\notag\\
&\quad+\sum_{\substack{j,r=1\\ j<r}}^d \Big(D_xg_r(X_k,X_{m_k})g_j(X_k,X_{m_k})-D_xg_j(X_k,X_{m_k})g_r(X_k,X_{m_k})\Big)A_{jr}^{t_k,t},
\end{align}
where $A_{jr}^{t_k,t}=\frac{1}{2}(I_{jr}^{t_k,t}-I_{rj}^{t_k,t})$ is the L\'evy area (\cite{Conall2019arxiv,Shiryaev1996}).
%
%

According to Lemma 2.2 and  Remark 3.2 in \cite{Conall2019arxiv}, we have the following results.

\begin{lemma}[\cite{Conall2019arxiv}]\label{Levy area approximation}
For all $j,r=1,2,\dots,d$ and $0\le t_k\le t\le T$, there exists a positive constant $C$ such that for $p\ge 1$, it holds that
\begin{align*}
\mathbb{E}\left(\vert A_{jr}^{t_k,t}\vert^p\big\vert\mathcal{F}_{t_k}\right)\le C(t-t_k)^{p},~a.s.
\end{align*}
\end{lemma}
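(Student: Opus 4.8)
The plan is to recognize $A_{jr}^{t_k,t}$ as the terminal value of a continuous square-integrable martingale and to apply a conditional Burkholder--Davis--Gundy (BDG) inequality, after first reducing the estimate to one over a deterministic horizon by the strong Markov property.

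It suffices to treat $j\neq r$, since $A_{jj}^{t_k,t}=\tfrac12\big(I_{jj}^{t_k,t}-I_{jj}^{t_k,t}\big)=0$. Fix $k$. Because each $t_k$ is an $\mathcal{F}_t$-stopping time, the strong Markov property shows that $W(s):=B(t_k+s)-B(t_k)$, $s\ge0$, is a $d$-dimensional Brownian motion independent of $\mathcal{F}_{t_k}$ and adapted to $\mathcal{G}_s:=\mathcal{F}_{t_k+s}$. With $h:=t-t_k\in[0,T]$, the change of variables $u\mapsto t_k+u$ gives
\begin{align*}
I_{rj}^{t_k,t}=\int_0^h W^r(u)\,\mathrm{d}W^j(u),\qquad I_{jr}^{t_k,t}=\int_0^h W^j(u)\,\mathrm{d}W^r(u),
\end{align*}
so $A_{jr}^{t_k,t}$ is a measurable functional of $W$ and of $h$. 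Hence it is enough to establish, for a standard Brownian motion and a deterministic horizon $h\in[0,T]$, a bound $\mathbb{E}|A_{jr}^{0,h}|^p\le C h^p$ with $C$ independent of $h$, and then re-insert $h=t-t_k$ under $\mathbb{E}[\,\cdot\mid\mathcal{F}_{t_k}]$.

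For the deterministic estimate I would consider, for $s\in[0,h]$, the continuous $L^2$-martingale
\begin{align*}
N_s:=\frac12\int_0^s W^j(u)\,\mathrm{d}W^r(u)-\frac12\int_0^s W^r(u)\,\mathrm{d}W^j(u),
\end{align*}
which satisfies $N_0=0$ and $N_h=A_{jr}^{0,h}$. Since $W^j$ and $W^r$ are independent for $j\neq r$, the cross term in the quadratic variation vanishes, so $\langle N\rangle_h=\frac14\int_0^h\big((W^j(u))^2+(W^r(u))^2\big)\,\mathrm{d}u$. The BDG inequality gives $\mathbb{E}|A_{jr}^{0,h}|^p\le C_p\,\mathbb{E}\big(\langle N\rangle_h^{p/2}\big)$. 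For $p\ge2$, H\"older's inequality in $u$ yields $\langle N\rangle_h^{p/2}\le C h^{p/2-1}\int_0^h\big((W^j(u))^2+(W^r(u))^2\big)^{p/2}\,\mathrm{d}u$; taking expectations and using $\mathbb{E}|W^i(u)|^p=c_p u^{p/2}$ gives
\begin{align*}
\mathbb{E}|A_{jr}^{0,h}|^p\le C h^{p/2-1}\int_0^h u^{p/2}\,\mathrm{d}u = C h^{p}.
\end{align*}
Re-conditioning --- repeating the argument with $\mathbb{E}$ replaced by $\mathbb{E}[\,\cdot\mid\mathcal{F}_{t_k}]$, the conditional BDG inequality relative to $(\mathcal{G}_s)_{s\ge0}$, and $h$ replaced by $t-t_k$ --- yields the claim for $p\ge2$; for $1\le p<2$ the conditional Jensen inequality reduces it to the case $p=2$.

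The point I expect to require the most care is the conditioning machinery: justifying that $B(t_k+\cdot)-B(t_k)$ is a Brownian motion independent of $\mathcal{F}_{t_k}$ (the strong Markov property at the stopping time $t_k$), that the It\^o integrals and the conditional BDG inequality are valid relative to the shifted filtration, and that one may legitimately substitute the $\mathcal{F}_{t_k}$-measurable quantity $t-t_k$ into an estimate proved for deterministic horizons. All of this is standard and is precisely what is imported here from \cite{Conall2019arxiv}; an alternative that bypasses it is Brownian scaling, which gives $A_{jr}^{0,h}\stackrel{d}{=}h\,A_{jr}^{0,1}$ at once, so that one only needs $\mathbb{E}|A_{jr}^{0,1}|^p<\infty$ --- immediate from BDG, or from the explicit L\'evy law of the stochastic area.
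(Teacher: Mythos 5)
Your argument is correct. Note, however, that the paper does not prove this lemma at all: it is imported verbatim by citation from \cite{Conall2019arxiv} (``According to Lemma 2.2 and Remark 3.2 in \cite{Conall2019arxiv} \dots''), so there is no in-paper proof to compare against. Your reconstruction is essentially the standard argument behind the cited result: shift by the stopping time $t_k$ via the strong Markov property so that $W(s)=B(t_k+s)-B(t_k)$ is a Brownian motion independent of $\mathcal{F}_{t_k}$, identify $A_{jr}^{t_k,t}$ as the terminal value of the continuous martingale $N_s=\tfrac12\int_0^s W^j\,\mathrm{d}W^r-\tfrac12\int_0^s W^r\,\mathrm{d}W^j$ with $\langle N\rangle_h=\tfrac14\int_0^h\big((W^j)^2+(W^r)^2\big)\,\mathrm{d}u$ (the cross-variation vanishing for $j\neq r$), and apply BDG plus H\"older to get $Ch^p$; the reduction of $1\le p<2$ to $p=2$ by conditional Jensen is fine, and the diagonal case $A_{jj}\equiv 0$ is correctly dispatched. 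The one step that genuinely needs care is exactly the one you flag: re-inserting the $\mathcal{F}_{t_k}$-measurable horizon $h=t-t_k$ into a bound proved for deterministic $h$ is legitimate by the freezing lemma, since $W$ is independent of $\mathcal{F}_{t_k}$ and, in this adaptive scheme, $t_{k+1}-t_k=\Delta_k$ is $\mathcal{F}_{t_k}$-measurable. Your scaling remark $A_{jr}^{0,h}\stackrel{d}{=}h\,A_{jr}^{0,1}$ is the quickest route of all, reducing everything to the finiteness of $\mathbb{E}\vert A_{jr}^{0,1}\vert^p$. No gaps.
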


\begin{lemma}[\cite{Conall2019arxiv,Shiryaev1996}]\label{|B(t_1)-B(t_2)|^p}
Let $t_k,t_{k+1}$ be two $\mathcal{F}_{t_k}$ stopping times and $\mathcal{F}_{t_k}$-measurable. Then for any $t\in[t_k,t_{k+1}]$, $\Delta B^j(t)= B^j(t)-B^j(t_k), j=1,2,\dots,d$ is $\mathcal{F}_{t_k}$-conditionally normally distributed and 
\begin{align*}
\mathbb{E}\left( \Delta B^j(t) \bigg\vert\mathcal{F}_{t_k}\right)=&0,~a.s.,\\
\mathbb{E}\left(\left\vert \Delta B^j(t)\right\vert^p\bigg\vert\mathcal{F}_{t_k}\right)= &\gamma_p( t-t_k)^{\frac{p}{2}}, ~a.s.
\end{align*}
with $\gamma_p=2^{\frac{p}{2}}\Gamma(\frac{p+1}{2})/\sqrt{\pi}$ for $p=1,2,\dots$, $\Gamma$ is the Gamma function.
\end{lemma}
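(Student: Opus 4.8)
The plan is to reduce the claim to the classical moment formula for a scalar standard Gaussian via the strong Markov property of Brownian motion. The structural fact that makes this work is non-anticipativity of the mesh: by \eqref{Delta_k} the step size $\Delta_k$ is a deterministic function of $X_k$ and $t_k$, both $\mathcal{F}_{t_k}$-measurable, so $t_{k+1}=t_k+\Delta_k$ and hence $t-t_k$ for any admissible intermediate point $t\in[t_k,t_{k+1}]$ are $\mathcal{F}_{t_k}$-measurable, while $t_k$ is an $\mathcal{F}_t$-stopping time.

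First I would apply the strong Markov property at the stopping time $t_k$: conditionally on $\mathcal{F}_{t_k}$, the shifted process $W(s):=B(t_k+s)-B(t_k)$, $s\ge 0$, is a $d$-dimensional standard Brownian motion independent of $\mathcal{F}_{t_k}$. Writing $\Delta B^j(t)=W^j(t-t_k)$, we are evaluating an independent Brownian motion at the $\mathcal{F}_{t_k}$-measurable frozen time $t-t_k$. By the independence lemma for conditional expectations, for every bounded Borel $\varphi:\mathbb{R}\to\mathbb{R}$,
\begin{align*}
\mathbb{E}\big(\varphi(\Delta B^j(t))\,\big|\,\mathcal{F}_{t_k}\big)=\Psi(t-t_k),\qquad \Psi(s):=\mathbb{E}\,\varphi\big(W^j(s)\big)=\int_{\mathbb{R}}\varphi(x)\,\frac{e^{-x^2/(2s)}}{\sqrt{2\pi s}}\,\mathrm{d}x,
\end{align*}
i.e.\ $\Delta B^j(t)$ is $\mathcal{F}_{t_k}$-conditionally $N(0,t-t_k)$. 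A routine truncation/monotone convergence argument upgrades this to $\varphi(x)=x$ and $\varphi(x)=|x|^p$, since the Gaussian has finite moments of all orders. Choosing $\varphi(x)=x$ gives $\mathbb{E}(\Delta B^j(t)\mid\mathcal{F}_{t_k})=0$ a.s.; choosing $\varphi(x)=|x|^p$ gives $\mathbb{E}(|\Delta B^j(t)|^p\mid\mathcal{F}_{t_k})=(t-t_k)^{p/2}\,\mathbb{E}|Z|^p$ with $Z\sim N(0,1)$. Finally the substitution $u=x^2/2$ in $\mathbb{E}|Z|^p=\frac{2}{\sqrt{2\pi}}\int_0^\infty x^p e^{-x^2/2}\,\mathrm{d}x$ yields $\mathbb{E}|Z|^p=2^{p/2}\Gamma\!\left(\frac{p+1}{2}\right)/\sqrt{\pi}=\gamma_p$, which closes the argument.

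The only genuine subtlety — the part needing care rather than computation — is the measurability bookkeeping: justifying that conditioning on $\mathcal{F}_{t_k}$ simultaneously freezes $t-t_k$ to a deterministic value and keeps the shifted increments a genuine Brownian motion independent of $\mathcal{F}_{t_k}$. This rests precisely on $t_k$ being a stopping time, on $t-t_k$ being $\mathcal{F}_{t_k}$-measurable (the non-anticipative step-size rule of \eqref{Delta_k}), and on the strong Markov property; once these are in hand the remaining moment identity is elementary. Since the statement is quoted from \cite{Conall2019arxiv,Shiryaev1996}, the argument essentially records why those results transfer verbatim to the present adaptive SDEPCA setting.
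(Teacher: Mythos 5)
The paper does not actually prove this lemma: it is imported verbatim from \cite{Conall2019arxiv,Shiryaev1996} with the one-line remark that it follows from Lemma 2.2 and Remark 3.2 there, so there is no in-paper argument to compare against. Your reconstruction is correct and is the standard route: the strong Markov property at the a.s.\ finite stopping time $t_k$ makes $W(s)=B(t_k+s)-B(t_k)$ a Brownian motion independent of $\mathcal{F}_{t_k}$; the freezing (independence) lemma then turns $\mathbb{E}\bigl(\varphi(\Delta B^j(t))\,\big\vert\,\mathcal{F}_{t_k}\bigr)$ into the unconditional Gaussian expectation evaluated at the $\mathcal{F}_{t_k}$-measurable time $t-t_k$; and the substitution $u=x^2/2$ gives $\mathbb{E}\vert Z\vert^p=2^{p/2}\Gamma\bigl(\tfrac{p+1}{2}\bigr)/\sqrt{\pi}=\gamma_p$, which checks out. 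You also correctly isolate the one hypothesis that makes the transfer to the adaptive setting legitimate, namely that $t_{k+1}$ (equivalently $\Delta_k$) is $\mathcal{F}_{t_k}$-measurable because the step-size rule \eqref{Delta_k} depends only on $X_k$ and $t_k$; this is exactly the non-anticipativity condition stated in the lemma and the reason the result is not automatic for arbitrary adaptive meshes. Two small points of care, neither a gap: your argument should be read as covering both the case of a deterministic $t$ restricted to the random interval $[t_k,t_{k+1}]$ (where $t-t_k$ is $\mathcal{F}_{t_k}$-measurable and the conclusion is asserted on the $\mathcal{F}_{t_k}$-measurable event $\{t_k\le t\le t_{k+1}\}$) and the case $t=t_{k+1}$ itself, where the evaluation time is the $\mathcal{F}_{t_k}$-measurable random variable $\Delta_k$ — your freezing-lemma formulation handles both; and the truncation/monotone-convergence upgrade from bounded $\varphi$ to $\varphi(x)=|x|^p$ is indeed needed and is routine since all Gaussian moments are finite.
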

In the rest of this paper, we will always use $C$ to denote a generic constant that varies from one place to another and may depend on $p$ and $T$, but independent of the step sizes.

\begin{theorem}\label{numerical solu. bounded}
Suppose that Eq.\eqref{SDEPCA} satisfies Assumptions \ref{f condition} and \ref{g condition}, $\Delta_k$ is taken as form \eqref{Delta_k} with $\Delta(\cdot)$ satisfies Assumption \ref{step size}. Then for any $t\in[t_k,t_{k+1})$ and $p>0$, the Milstein scheme has the following properties
\begin{align}\label{E(X(t)|F_(t_k))}
 \mathbb{E}\left(\vert X(t)\vert^p\vert\mathcal{F}_{t_k}\right)\le C(1+\vert X_k\vert^p+\vert X([t_k])\vert^p),~a.s.
\end{align}
and
\begin{align*}
\sup_{0\le t\le T}\mathbb{E}\vert X(t)\vert^p\le C.
\end{align*}
\end{theorem}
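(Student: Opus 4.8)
The plan is to establish the conditional moment bound \eqref{E(X(t)|F_(t_k))} first, then bootstrap it to the uniform-in-time bound via a discrete Gronwall argument. For the conditional bound, I would start from the continuous-form representation \eqref{continuous Milstein} on $[t_k,t_{k+1})$ and expand $|X(t)|^2 = |X_k + \Phi_k\Delta t + \sum_j g_j \Delta B^j(t) + (\text{Milstein correction})|^2$. The key algebraic step is to isolate the term $\langle X_k, f(X_k,X_{m_k})\rangle \Delta_k + \tfrac12 |f(X_k,X_{m_k})|^2 \Delta_k^2$ and invoke Assumption \ref{step size} (in its Remark form, since $t-t_k \le \Delta_k \le \Delta(X_k)$) to bound it by $\alpha(1+|X_k|^2+|X_{m_k}|^2)(t-t_k)$; here $X_{m_k} = X([t_k])$ by definition of $m_k$. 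All remaining cross terms either have conditional expectation zero (the stochastic integrals against $\Delta B^j(t)$ and the Lévy-area terms, using Lemma \ref{|B(t_1)-B(t_2)|^p} and Lemma \ref{Levy area approximation} together with the fact that $X_k, X_{m_k}$ are $\mathcal{F}_{t_k}$-measurable) or are controlled in conditional $\mathcal{L}_p$ by the growth bounds in Assumption \ref{g condition} and inequality \eqref{D_xg_jg_r}, each contributing a factor $(t-t_k)$ or higher. Raising to the power $p/2$, applying the elementary inequality $(\sum a_i)^{p/2} \le C \sum a_i^{p/2}$, and taking conditional expectations yields $\mathbb{E}(|X(t)|^p \mid \mathcal{F}_{t_k}) \le (1 + C(t-t_k))|X_k|^p + C(t-t_k)(1+|X_{m_k}|^p)$; since $t-t_k \le T/M \le T$ is bounded, this gives \eqref{E(X(t)|F_(t_k))}.

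For the uniform bound, I would let $t \in [t_N, t_{N+1})$ for arbitrary $N$ and iterate the one-step estimate. The subtlety is the delay term $X_{m_k}$: because $m_k$ indexes an integer time point no later than $t_k$, one has $X_{m_k} = X_{t_m}$ for some $m \le k$, so it is already accounted for among the earlier numerical values. I would take expectations in the one-step bound, set $u_k := \sup_{t_k \le s} \mathbb{E}|X(s\wedge t_{k+1})|^p$ or more simply $u_k := \mathbb{E}|X_k|^p$, and derive $u_{k+1} \le (1 + C\Delta_k) u_k + C\Delta_k(1 + \max_{m\le k} u_m)$. Because $\sum_{i=0}^{k}\Delta_i \le T$ (the time steps partition $[0,T]$) and $\max_{m\le k}u_m$ can be absorbed by a standard induction, the discrete Gronwall lemma gives $\max_{0\le k\le N_T} u_k \le C(1 + |x_0|^p)\exp(CT)$, uniformly in $M$; combining with \eqref{E(X(t)|F_(t_k))} over each subinterval then bounds $\sup_{0\le t\le T}\mathbb{E}|X(t)|^p$.

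The main obstacle I anticipate is the careful bookkeeping of the delay term in the Gronwall step: unlike the delay-free case, $u_{k+1}$ depends not just on $u_k$ but on an earlier value $u_{m_k}$, so one must argue that the $\max$ over past values grows at most geometrically. A clean way around this is to prove by induction on integer time-blocks: first bound $\sup_{t\in[0,1)}\mathbb{E}|X(t)|^p$ using that on this block $X_{m_k}=X_0=x_0$ is constant, then, having bounded $\mathbb{E}|X(t)|^p$ on $[0,\ell)$, treat the block $[\ell,\ell+1)$ where the delay term $X_{m_k}=X(\ell)$ (or an earlier integer value) is already bounded, and apply the delay-free Gronwall argument on each unit block. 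Since $T$ is finite there are finitely many such blocks, and the constants compound only finitely often. A secondary technical point is ensuring all the conditional-expectation manipulations are valid at stopping times, which is exactly what Lemmas \ref{Levy area approximation} and \ref{|B(t_1)-B(t_2)|^p} are designed to supply.
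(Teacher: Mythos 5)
Your plan for the conditional one-step bound \eqref{E(X(t)|F_(t_k))} matches the paper's proof: expand $|X(t)|^2$ around $X_k$, use Assumption \ref{step size} to absorb $\langle X_k,f\rangle\Delta_k+\tfrac12|f|^2\Delta_k^2$, control the remaining terms by Assumption \ref{g condition}, \eqref{D_xg_jg_r} and Lemmas \ref{Levy area approximation}--\ref{|B(t_1)-B(t_2)|^p}, then raise to the power $p/2$. That part is fine.

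The gap is in the second part. Your discrete Gronwall argument needs the sharp recursion $u_{k+1}\le(1+C\Delta_k)u_k+C\Delta_k(1+\max_{m\le k}u_m)$, and the step you sketch does not deliver it. After raising the pathwise inequality to the power $p/2$, the elementary bound $(\sum_i a_i)^{p/2}\le n^{p/2-1}\sum_i a_i^{p/2}$ puts a multiplicative constant strictly larger than $1$ in front of $|X_k|^p$, so you only obtain $u_{k+1}\le C(1+u_k+u_{m_k})$ with $C>1$ independent of $\Delta_k$; iterating this over $N_T$ steps gives a factor $C^{N_T}$, and since $N_T\sim M$ (Lemma \ref{N_T}) the resulting bound is not uniform in $M$. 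Nor can you repair it with a Young-type split $(a+b)^{p/2}\le(1+\Delta_k)a^{p/2}+C\Delta_k^{1-p/2}b^{p/2}$: the martingale contribution to $\mathbb{E}(b^{p/2}\mid\mathcal{F}_{t_k})$ is only $O(\Delta_k^{p/4})$, so the product is $O(\Delta_k^{1-p/4})$, which is not $O(\Delta_k)$ once $p>4$. Getting the $(1+C\Delta_k)$ structure would require exploiting the vanishing conditional expectation of the martingale cross terms \emph{inside} the $p$-th power expansion, which is exactly the delicate point your sketch glosses over.

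The paper sidesteps this entirely by reversing the order of operations: it telescopes the \emph{quadratic} pathwise inequality \eqref{X(t)=X_k+} all the way back to $k=0$ first, so the accumulated increments become integrals over $[0,t]$ and the accumulated martingale increments become a single stochastic integral $\int_0^t(1+|\bar X(u)|^2+|\bar X([u])|^2)\,{\rm d}B^j(u)$; only then does it raise to the power $p/2$, apply the Burkholder--Davis--Gundy inequality to that integral (term $I_4$) and H\"older plus the conditional moment lemmas to the rest ($I_1$--$I_3$, $I_5$, $I_6$), and finish with a \emph{continuous} Gronwall inequality applied to $\sup_{0\le v\le u}\mathbb{E}|X(v)|^p$. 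The delay term needs no block-by-block induction: since $[u]\le u$, $\mathbb{E}|\bar X([u])|^p$ is dominated by the same running supremum, so the standard Gronwall closes in one pass. I would recommend adopting this sum-first-then-take-powers ordering rather than trying to sharpen the one-step $\mathcal{L}_p$ estimate.
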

\begin{proof}\rm
In the following, we suppose that $p\ge 4$, the result for $0 < p < 4$ follows from H\"older's inequality immediately. 

For $x,y\in\mathbb{R}^n, h>0, W_j, Z_{rj}\in\mathbb{R}$, $j,r=1,\dots, d$, let $\phi(x,y,h)=x+ f\left(x,y\right)h$ and
\begin{align*}
\theta(x,y,W_j,Z_{rj})=\sum_{j=1}^d g_j\left(x,y\right)W_j+\sum_{j,r=1}^d D_xg_j(x,y)g_r(x,y) Z_{rj},
\end{align*}
 then for any $t\in [t_k,t_{k+1}], k\in\mathbb{N}$, according to \eqref{discrete Milstein-1} and \eqref{continuous Milstein}, we have
\begin{align}\label{X(t)^2_0}
\vert X(t)\vert^2=&\vert \phi(X_k,X_{m_k},t-t_k)\vert^2+\left\vert \theta(X_k,X_{m_k},\Delta B^j(t),I_{rj}^{t_k,t})\right\vert^2\notag\\
&+2\left\langle \phi(X_k,X_{m_k},t-t_k), \theta(X_k,X_{m_k},\Delta B^j(t),I_{rj}^{t_k,t})\right\rangle.
\end{align}
Since $t-t_k\le \Delta_k\le \Delta(X_k)$, it follows from Assumption \ref{step size} that
\begin{align}\label{phi^2}
\vert\phi(X_k,X_{m_k},t-t_k)\vert^2\le &\vert X_k\vert^2+2(t-t_k)\left(\langle X_k, f(X_k,X_{m_k})\rangle+\frac{1}{2}\vert f(X_k,X_{m_k})\vert^2\Delta(X_k)\right)\notag\\
\le &\vert X_k\vert^2+2\alpha(t-t_k)(1+\vert X_k\vert^2+\vert X_{m_k}\vert^2).
\end{align}
Following \eqref{I_rj^t_k,t}, using the inequality $|\sum_{i=1}^na_i|^p\le n^{p-1}\sum_{i=1}^n |a_i|^p, n\in\mathbb{N}_+$, we can obtain 
\begin{align*}
&\left\vert \theta(X_k,X_{m_k},\Delta B^j(t),I_{rj}^{t_k,t})\right\vert^2\\
\le&5\left\vert \sum_{j=1}^dg_j(X_k,X_{m_k})\Delta B^j(t)\right\vert^2+\frac{5}{4}\bigg\vert\sum_{j=1}^d D_xg_j(X_k,X_{m_k})g_j(X_k,X_{m_k})\bigg\vert^2(t-t_k)^2\\
&+\frac{5}{4}\bigg\vert\sum_{j=1}^d D_xg_j(X_k,X_{m_k})g_j(X_k,X_{m_k})(\Delta B^j(t))^2\bigg\vert^2\\
&+\frac{5}{4}\Bigg\vert\sum_{\substack{j,r=1\\ j<r}}^d \big(D_xg_j(X_k,X_{m_k})g_r(X_k,X_{m_k})+D_xg_r(X_k,X_{m_k})g_j(X_k,X_{m_k})\big)\Delta B^j(t)\Delta B^r(t)\Bigg\vert^2\\
&+5\Bigg\vert\sum_{\substack{j,r=1\\ j<r}}^d\left(D_xg_r(X_k,X_{m_k})g_j(X_k,X_{m_k})-D_xg_j(X_k,X_{m_k})g_r(X_k,X_{m_k})\right)A_{jr}^{t_k,t}\Bigg\vert^2.
\end{align*}
Then applying the Cauchy-Schwarz inequality and Assumption \ref{g condition}, it can be deduced that
\begin{align}\label{X(t)^2_2}
&\left\vert \theta(X_k,X_{m_k},\Delta B^j(t),I_{rj}^{t_k,t})\right\vert^2\notag\\
\le&5d\sum_{j=1}^d\vert g_j(X_k,X_{m_k})\vert^2\vert \Delta B^j(t)\vert^2+\frac{5d}{4}\sum_{j=1}^d\vert D_xg_j(X_k,X_{m_k}) g_j(X_k,X_{m_k})\vert^2\vert \Delta B^j(t)\vert^4\notag\\
&+\frac{5d^2}{4}\sum_{\substack{j,r=1\\ j<r}}^d\vert D_xg_j(X_k,X_{m_k})g_r(X_k,X_{m_k})+D_xg_r(X_k,X_{m_k})g_j(X_k,X_{m_k})\vert^2\vert \Delta B^j(t)\vert^2\vert \Delta B^r(t)\vert^2\notag\\
&+\frac{5d}{4}\sum_{j=1}^d\vert D_xg_j(X_k,X_{m_k}) g_j(X_k,X_{m_k})\vert^2(t-t_k)^2\notag\\
&+5d^2\sum_{\substack{j,r=1\\ j<r}}^d\vert D_xg_r(X_k,X_{m_k})g_j(X_k,X_{m_k})-D_xg_j(X_k,X_{m_k})g_r(X_k,X_{m_k})\vert^2\vert A_{jr}^{t_k,t}\vert^2\notag\\
\le&C(1+\vert X_k\vert^2+\vert X_{m_k}\vert^2)\bigg(\sum_{j=1}^d\vert \Delta B^j(t)\vert^2+(t-t_k)^2+\sum_{j=1}^d\vert \Delta B^j(t)\vert^4+\sum_{\substack{j,r=1\\ j<r}}^d\vert A_{jr}^{t_k,t}\vert^2\bigg).
\end{align}
Using \eqref{I_rj^t_k,t} once again, we know that
\begin{align*}
&2\left\langle \phi(X_k,X_{m_k},t-t_k), \theta(X_k,X_{m_k},\Delta B^j(t),\int_{t_{k}}^{t}\int_{t_{k}}^u {\rm d}B^r(v){\rm d}B^j(u))\right\rangle\\
=&2\sum_{j=1}^d \int_{t_k}^t\phi^{\rm T}(X_k,X_{m_k},t-t_k)g_j(X_k,X_{m_k}){\rm d}B^j(u)\\
&+\sum_{j=1}^d\phi^{\rm T}(X_k,X_{m_k},t-t_k) D_xg_j(X_k,X_{m_k})g_j(X_k,X_{m_k})(\Delta B^j(t))^2\\
&-\sum_{j=1}^d\int_{t_k}^t\phi^{\rm T}(X_k,X_{m_k},t-t_k) D_xg_j(X_k,X_{m_k})g_j(X_k,X_{m_k}){\rm d}u\\
&+\sum_{\substack{j,r=1\\ j<r}}^d\phi^{\rm T}(X_k,X_{m_k},t-t_k) \Big(D_xg_j(X_k,X_{m_k})g_r(X_k,X_{m_k})+D_xg_r(X_k,X_{m_k})g_j(X_k,X_{m_k})\Big)\Delta B^r(t)\Delta B^j(t)\\
&+2\sum_{\substack{j,r=1\\ j<r}}^d\phi^{\rm T}(X_k,X_{m_k},t-t_k) \left(D_xg_r(X_k,X_{m_k})g_j(X_k,X_{m_k})-D_xg_j(X_k,X_{m_k})g_r(X_k,X_{m_k})\right)A_{rj}^{t_k,t},
\end{align*}
by the fundamental inequality $2ab\le \vert a\vert^2+\vert b\vert^2$, the Cauchy-Schwarz inequality, \eqref{phi^2}, Assumption \ref{g condition} and  \eqref{D_xg_jg_r}, we have
\begin{align}\label{X(t)^2_3}
&2\left\langle \phi(X_k,X_{m_k},t-t_k), \theta(X_k,X_{m_k},\Delta B^j(t),\int_{t_{k}}^{t}\int_{t_{k}}^u {\rm d}B^r(v){\rm d}B^j(u))\right\rangle\notag\\
\le& \sum_{j=1}^d\int_{t_k}^t \left(\vert \phi(X_k,X_{m_k},t-t_k)\vert^2+\vert g_j(X_k,X_{m_k})\vert^2\right){\rm d}B^j(u)\notag\\
&+\frac{1}{2}\sum_{j=1}^d\left(\vert\phi(X_k,X_{m_k},t-t_k)\Delta B^j(t) \vert^2+\vert D_xg_j(X_k,X_{m_k})g_j(X_k,X_{m_k})\Delta B^j(t)\vert^2\right)\notag\\
&+\frac{1}{2}\sum_{j=1}^d \int_{t_k}^t\left(\vert \phi(X_k,X_{m_k},t-t_k)\vert^2+\vert D_xg_j(X_k,X_{m_k})g_j(X_k,X_{m_k})\vert^2\right){\rm d}u\notag\\
&+\frac{d}{2}\sum_{j=1}^d\left\vert \phi(X_k,X_{m_k},t-t_k) \Delta B^j(t)\right\vert^2\notag\\
&+\frac{1}{2}\sum_{j,r=1}^d\left\vert  (D_xg_j(X_k,X_{m_k})g_r(X_k,X_{m_k})+D_xg_r(X_k,X_{m_k})g_j(X_k,X_{m_k}))\Delta B^j(t)\right\vert^2\notag\\
&+\sum_{\substack{j,r=1\\ j<r}}^d\left(\vert \phi^{\rm T}(X_k,X_{m_k},t-t_k)\vert^2+ \vert D_xg_r(X_k,X_{m_k})g_j(X_k,X_{m_k})-D_xg_j(X_k,X_{m_k})g_r(X_k,X_{m_k})\vert^2\right)\vert A_{rj}^{t_k,t}\vert\notag\\
\le& C (1+\vert X_k\vert^2+\vert X_{m_k}\vert^2)\left(\sum_{j=1}^d\Delta B^j(t)+\sum_{j=1}^d\left\vert \Delta B^j(t)\right\vert^2+(t-t_k)+\sum_{\substack{j,r=1\\ j<r}}^d\vert A_{rj}^{t_k,t}\vert\right).
\end{align}
Substituting \eqref{phi^2}-\eqref{X(t)^2_3} into \eqref{X(t)^2_0}, note that $t-t_k\le \Delta_k\le 1$, one can obtain 
\begin{align}\label{X(t)=X_k+}
\vert X(t)\vert^2\le&\vert X_k\vert^2+C (1+\vert X_k\vert^2+\vert X_{m_k}\vert^2)\Bigg((t-t_k)+\sum_{j=1}^d\left\vert \Delta B^j(t)\right\vert^2\notag\\
&+\sum_{j=1}^d\left\vert \Delta B^j(t)\right\vert^4+\sum_{j=1}^d\Delta B^j(t)+\sum_{\substack{j,r=1\\ j<r}}^d\vert A_{rj}^{t_k,t}\vert+\sum_{\substack{j,r=1\\ j<r}}^d\vert A_{rj}^{t_k,t}\vert^2\Bigg).
\end{align}
Hence, using the inequality $(\sum_{i=1}^n\vert a_i\vert)^p\le n^{p-1}\sum_{i=1}^n\vert a_i\vert^p, p>0$ again, it follows that
\begin{align*}
\vert X(t)\vert^p\le&C\vert X_k\vert^p+C (1+\vert X_k\vert^p+\vert X_{m_k}\vert^p)\Bigg((t-t_k)^{\frac{p}{2}}+\sum_{j=1}^d\left\vert \Delta B^j(t)\right\vert^p\\
&+\sum_{j=1}^d\left\vert \Delta B^j(t)\right\vert^{2p}+ \sum_{j=1}^d\vert \Delta B^j(t)\vert^{\frac{p}{2}}+\sum_{\substack{j,r=1\\ j<r}}^d\vert A_{rj}^{t_k,t}\vert^{\frac{p}{2}}+\sum_{\substack{j,r=1\\ j<r}}^d\vert A_{rj}^{t_k,t}\vert^p\Bigg).
\end{align*}
Note that $X_k$, $X_{m_k}$, $t_k$ and $t_{k+1}$ are all $\mathcal{F}_{t_k}$-measurable, taking expectations on both sides, using Lemmas \ref{|B(t_1)-B(t_2)|^p} and \ref{Levy area approximation}, it is easy to see that
\begin{align*}
&\mathbb{E}\left(\vert X(t)\vert^p\vert\mathcal{F}_{t_k}\right)\\
\le&C\vert X_k\vert^p+C (1+\vert X_k\vert^p+\vert X_{m_k}\vert^p)\Bigg(\Delta_k^{\frac{p}{2}}+\sum_{j=1}^d\mathbb{E}\left(\vert \Delta B^j(t)\vert^p\vert\mathcal{F}_{t_k}\right)+\sum_{j=1}^d\mathbb{E}\left(\vert \Delta B^j(t)\vert^{2p}\big\vert\mathcal{F}_{t_k}\right)\\
&+ \sum_{j=1}^d\mathbb{E}\left(\vert \Delta B^j(t)\vert^{\frac{p}{2}}\big\vert\mathcal{F}_{t_k}\right)
+\sum_{\substack{j,r=1\\ j<r}}^d\mathbb{E}\left(\vert A_{rj}^{t_k,t}\vert^{\frac{p}{2}}\big\vert\mathcal{F}_{t_k}\right)+\sum_{\substack{j,r=1\\ j<r}}^d\mathbb{E}\left(\vert A_{rj}^{t_k,t}\vert^p\big\vert\mathcal{F}_{t_k}\right)\Bigg)\\
\le&C\vert X_k\vert^p+C (1+\vert X_k\vert^p+\vert X_{m_k}\vert^p)\Delta_k^{\frac{p}{4}}\\
\le& C (1+\vert X_k\vert^p+\vert X_{m_k}\vert^p),~a.s.
\end{align*}
The proof for \eqref{E(X(t)|F_(t_k))} is completed.

In particular, according to \eqref{X(t)=X_k+}, we also know that
\begin{align}\label{X_k+1^2--1}
\vert X_{k+1}\vert^2\le&\vert X_k\vert^2+C (1+\vert X_k\vert^2+\vert X_{m_k}\vert^2)\Bigg((t_{k+1}-t_k)+\sum_{j=1}^d \vert \Delta B^j(t_{k+1})\vert^2\notag\\
&+\sum_{j=1}^d \vert \Delta B^j(t_{k+1})\vert^4+\sum_{j=1}^d \Delta B^j(t_{k+1}) +\sum_{\substack{j,r=1\\ j<r}}^d\vert A_{rj}^{t_k,t_{k+1}}\vert+\sum_{\substack{j,r=1\\ j<r}}^d\vert A_{rj}^{t_k,t_{k+1}}\vert^2\Bigg).
\end{align}
Since for any $t\in[0,T]$, there always exists $k\in\mathbb{N}$ such that $t\in[t_k,t_{k+1})$, according to \eqref{X(t)=X_k+} and \eqref{X_k+1^2--1}, it can be derived by iteration that 
\begin{align}\label{E|X(t)|^p}
\mathbb{E}\vert X(t)\vert^p=\mathbb{E}(\vert X(t)\vert^2)^{\frac{p}{2}}\le7^{\frac{p}{2}-1} \left(\vert X_0\vert^p+I_1+I_2+I_3+I_4+I_5+I_6\right),
\end{align}
where 
\begin{align*}
I_1=&C\mathbb{E}\Bigg(\sum_{i=0}^{k-1}(1+\vert X_{i}\vert^2+\vert X_{m_i}\vert^2)\Delta_{i}+(1+\vert X_k\vert^2+\vert X_{m_k}\vert^2)(t-t_k)\Bigg)^{\frac{p}{2}},\\
I_2=&C\mathbb{E}\Bigg(\sum_{i=0}^{k-1}(1+\vert X_{i}\vert^2+\vert X_{m_i}\vert^2)\sum_{j=1}^d\vert \Delta B^j(t_{i+1})\vert^2+(1+\vert X_k\vert^2+\vert X_{m_k}\vert^2)\sum_{j=1}^d\vert B^j(t)-B^j(t_k)\vert^2\Bigg)^{\frac{p}{2}},\\
I_3=&C\mathbb{E}\Bigg(\sum_{i=0}^{k-1}(1+\vert X_{i}\vert^2+\vert X_{m_i}\vert^2)\sum_{j=1}^d\vert \Delta B(t_{i+1})\vert^4+(1+\vert X_k\vert^2+\vert X_{m_k}\vert^2)\sum_{j=1}^d\vert B^j(t)-B^j(t_k)\vert^4\Bigg)^{\frac{p}{2}},\\
I_4=&C\mathbb{E}\Bigg(\sum_{i=0}^{k-1}(1+\vert X_{i}\vert^2+\vert X_{m_i}\vert^2)\sum_{j=1}^d\Delta B^j(t_{i+1})+(1+\vert X_k\vert^2+\vert X_{m_k}\vert^2)\sum_{j=1}^d(B^j(t)-B^j(t_k)\Bigg)^{\frac{p}{2}},\\
I_5=&C\mathbb{E}\Bigg(\sum_{i=0}^{k-1}(1+\vert X_{i}\vert^2+\vert X_{m_i}\vert^2)\sum_{\substack{j,r=1\\ j<r}}^d\vert A_{rj}^{t_i,t_{i+1}}\vert+(1+\vert X_k\vert^2+\vert X_{m_k}\vert^2)\sum_{\substack{j,r=1\\ j<r}}^d\vert A_{rj}^{t_k,t}\vert\Bigg)^{\frac{p}{2}},\\
I_6=&C\mathbb{E}\Bigg(\sum_{i=0}^{k-1}(1+\vert X_{i}\vert^2+\vert X_{m_i}\vert^2)\sum_{\substack{j,r=1\\ j<r}}^d\vert A_{rj}^{t_i,t_{i+1}}\vert^2+(1+\vert X_k\vert^2+\vert X_{m_k}\vert^2)\sum_{\substack{j,r=1\\ j<r}}^d\vert A_{rj}^{t_k,t}\vert^2\Bigg)^{\frac{p}{2}}.
\end{align*}
Firstly, by H\"older's inequality, for any $t\in [0,T]$,
\begin{align}\label{I_1}
I_1=&C\mathbb{E}\left(\int_0^{t}(1+\vert \bar{X}(u)\vert^2+\vert \bar{X}([u])\vert^2){\rm d}u\right)^{\frac{p}{2}}\le C\mathbb{E}\int_0^{t}(1+\vert \bar{X}(u)\vert^p+\vert \bar{X}([u])\vert^p){\rm d}u.
\end{align}
With the help of Lemma \ref{|B(t_1)-B(t_2)|^p} and the following fundamental inequality: if $h_k\ge 0$ and $\sum_{k=1}^n h_k=1$, then $(\sum_{k=1}^{n}h_k|a_k|)^q\le \sum_{k=1}^{n}h_k|a_k|^q$ for $q\ge1$, we can obtain for $t\in [0,T]$ that 
\begin{align}\label{I_2}
I_2=&C\mathbb{E}\Bigg(\sum_{i=0}^{k-1}\frac{\Delta_i}{t}(1+\vert X_{i}\vert^2+\vert X_{m_i}\vert^2)\sum_{j=1}^d\vert \Delta B^j(t_{i+1})\vert^2\frac{t}{\Delta_i}\notag\\
&\qquad+\frac{t-t_k}{t}(1+\vert X_k\vert^2+\vert X_{m_k}\vert^2)\sum_{j=1}^d\vert B^j(t)-B^j(t_k)\vert^2\frac{t}{t-t_k}\Bigg)^{\frac{p}{2}}\notag\\
\le &C\mathbb{E}\Bigg(\sum_{i=0}^{k-1}\Delta_i^{1-\frac{p}{2}}(1+\vert X_{i}\vert^2+\vert X_{m_i}\vert^2)^{\frac{p}{2}}\sum_{j=1}^d\vert\Delta B^j(t_{i+1})\vert^p\notag\\
&\qquad+(t-t_k)^{1-\frac{p}{2}}(1+\vert X_k\vert^2+\vert X_{m_k}\vert^2)^{\frac{p}{2}}\sum_{j=1}^d\vert B^j(t)-B^j(t_k)\vert^p\Bigg)\notag\\
= &C\sum_{i=0}^{k-1}\mathbb{E}\left(\Delta_i^{1-\frac{p}{2}}(1+\vert X_{i}\vert^p+\vert X_{m_i}\vert^p)\mathbb{E}\left(\sum_{j=1}^d\vert\Delta B^j(t_{i+1})\vert^p\vert\mathcal{F}_{t_i}\right)\right)\notag\\
&+C\mathbb{E}\left((t-t_k)^{1-\frac{p}{2}}(1+\vert X_k\vert^p+\vert X_{m_k}\vert^p)\mathbb{E}\left(\sum_{j=1}^d\vert B^j(t)-B^j(t_k)\vert^p\vert\mathcal{F}_{t_k}\right)\right)\notag\\
\le &C\mathbb{E}\left(\sum_{i=0}^{k-1}\Delta_i(1+\vert X_{i}\vert^p+\vert X_{m_i}\vert^p)+(t-t_k)(1+\vert X_k\vert^p+\vert X_{m_k}\vert^p)\right)\notag\\
=&C\mathbb{E}\int_0^{t}(1+\vert \bar{X}(u)\vert^p+\vert \bar{X}([u])\vert^p){\rm d}u.
\end{align}
In the same way, since $t-t_k\le\Delta_k\le 1, \forall k\in\mathbb{N}$, one has
\begin{align}
I_3\le &C\mathbb{E}\Bigg(\sum_{i=0}^{k-1}\Delta_i^{1-\frac{p}{2}}(1+\vert X_{i}\vert^2+\vert X_{m_i}\vert^2)^{\frac{p}{2}}\sum_{j=1}^d\vert\Delta B^j(t_{i+1})\vert^{2p}\notag\\
&\qquad+(t-t_k)^{1-\frac{p}{2}}(1+\vert X_k\vert^2+\vert X_{m_k}\vert^2)^{\frac{p}{2}}\sum_{j=1}^d\vert B^j(t)-B^j(t_k)\vert^{2p}\Bigg)\notag\\
= &C\sum_{i=0}^{k-1}\mathbb{E}\left(\Delta_i^{1-\frac{p}{2}}(1+\vert X_{i}\vert^p+\vert X_{m_i}\vert^p)\sum_{j=1}^d\mathbb{E}\left(\vert\Delta B^j(t_{i+1})\vert^{2p}\vert\mathcal{F}_{t_i}\right)\right)\notag\\
&+C\mathbb{E}\left((t-t_k)^{1-\frac{p}{2}}(1+\vert X_k\vert^p+\vert X_{m_k}\vert^p)\sum_{j=1}^d\mathbb{E}\left(\vert B^j(t)-B^j(t_k)\vert^{2p}\vert\mathcal{F}_{t_k}\right)\right)\notag\\
\le &C\mathbb{E}\left(\sum_{i=0}^{k-1}\Delta_i^{1+\frac{p}{2}}(1+\vert X_{i}\vert^p+\vert X_{m_i}\vert^p)+(t-t_k)^{1+\frac{p}{2}}(1+\vert X_k\vert^p+\vert X_{m_k}\vert^p)\right)\notag\\
\le&C\mathbb{E}\int_0^{t}(1+\vert \bar{X}(u)\vert^p+\vert \bar{X}([u])\vert^p){\rm d}u.\label{I_3}
\end{align}
For $p\ge 4$, it follows from the Burkholder-Davis-Gundy inequality and H\"older's inequality that
\begin{align}\label{I_4}
I_4\le &C\sum_{j=1}^d\mathbb{E}\Bigg(\sum_{i=0}^{k-1}(1+\vert X_{i}\vert^2+\vert X_{m_i}\vert^2)\Delta B^j(t_{i+1})+(1+\vert X_k\vert^2+\vert X_{m_k}\vert^2)( B^j(t)-B^j(t_k))\Bigg)^{\frac{p}{2}}\notag\\
=&C\sum_{j=1}^d\mathbb{E}\Bigg(\int_0^t (1+\vert \bar{X}(u)\vert^2+\vert \bar{X}([u])\vert^2){\rm d}B^j(u)\Bigg)^{\frac{p}{2}}\notag\\
\le &C\mathbb{E}\int_0^t (1+\vert \bar{X}(u)\vert^p+\vert \bar{X}([u])\vert^p){\rm d}u.
\end{align}
Repeating the procedure \eqref{I_2}, using Lemma \ref{Levy area approximation}, it can be similarly deduce that
\begin{align}
I_5\le C\mathbb{E}\int_0^{t}(1+\vert \bar{X}(u)\vert^p+\vert \bar{X}([u])\vert^p){\rm d}u,\label{I_5}\\
I_6 \le C\mathbb{E}\int_0^{t}(1+\vert \bar{X}(u)\vert^p+\vert \bar{X}([u])\vert^p){\rm d}u,\label{I_6}
\end{align}
Substituting \eqref{I_1}-\eqref{I_6} into \eqref{E|X(t)|^p}, we have
\begin{align*}
\mathbb{E}\vert X(t)\vert^p\le C+ C\mathbb{E}\int_0^{t}(1+\vert \bar{X}(u)\vert^p+\vert\bar{X}([u])\vert^p){\rm d}u,
\end{align*}
which gives
\begin{align*}
\sup_{0\le u\le t}\mathbb{E}\vert X(u)\vert^p\le C(1+t)+C\int_0^{t}\sup_{0\le v\le u}\mathbb{E}\vert X(v)\vert^p{\rm d}u.
\end{align*}
Applying the Gronwall inequality, we can finally derive that
\begin{align*}
\sup_{0\le u\le t}\mathbb{E}\vert X(u)\vert^p\le C(1+t){\rm e}^{Ct}, ~\forall t\in [0,T].
\end{align*}
Hence, 
\begin{align*}
\sup_{0\le t\le T}\mathbb{E}\vert X(t)\vert^p\le C.
\end{align*}
The proof is completed.
\end{proof}
The following Lemma shows that the numerical simulation on the time interval $[0,T]$ can be completed in a finite number of time steps.
\begin{lemma}\label{N_T}
Suppose that Eq.\eqref{SDEPCA} satisfies Assumptions \ref{f condition} and \ref{g condition}, $\Delta_k$ is taken as form \eqref{Delta_k} with $\Delta(\cdot)$ satisfies Assumption \ref{step size} and \ref{step size lower bound}. Let $N_T=\max\{k\in\mathbb{N}:t_k<T\}$ and $t_{N_T+1}=T$. Then for any $p>0$ and $M\in\mathbb{N}_+$,
\begin{align*}
\mathbb{E}(N_T+1)^p\le CM^p.
\end{align*}
\end{lemma}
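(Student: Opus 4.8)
The plan is to bound the number of steps by separating those whose length is capped by the integer-gridding term $[t_k]+1-t_k$ (or, for the last step, by the terminal time $T$) from those whose length equals $\Delta(X_k)/M$, and then to control the latter using the lower bound $\Delta(x)\ge(a|x|^q+b)^{-1}$ of Assumption \ref{step size lower bound}.

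To avoid having to know a priori that $N_T<\infty$, I would first fix $N\in\mathbb{N}_+$ and work with the finite quantity $K_N:=N_T\wedge N$, deriving a bound uniform in $N$ and then letting $N\to\infty$ via Fatou's lemma. Put $\mathcal{B}:=\{0\le k\le K_N:\Delta_k<\Delta(X_k)/M\}$. If $k\in\mathcal{B}$, then by \eqref{Delta_k} the defining minimum is attained at $[t_k]+1-t_k$ (or at $T-t_k$ when $k=N_T$), so that $t_{k+1}=[t_k]+1$ is a positive integer not exceeding $\lceil T\rceil$, or $t_{k+1}=T$; since $(t_k)_k$ is strictly increasing, $k\mapsto t_{k+1}$ is injective on $\mathcal{B}$, and therefore $|\mathcal{B}|\le\lceil T\rceil+1=:C_0$, a constant independent of $M$ and $N$. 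For every $k\le K_N$ with $k\notin\mathcal{B}$ one has $\Delta_k=\Delta(X_k)/M$, hence $M\Delta_k=\Delta(X_k)\ge(a|X_k|^q+b)^{-1}$, i.e. $1\le M(a|X_k|^q+b)\Delta_k$. Summing $1$ over $k\in\{0,\dots,K_N\}$, splitting by $\mathcal{B}$, and using $\sum_{k=0}^{K_N}(a|X_k|^q+b)\Delta_k=\int_0^{t_{K_N+1}}(a|\bar{X}(u)|^q+b){\rm d}u\le\int_0^T(a|\bar{X}(u)|^q+b){\rm d}u$, I obtain
\begin{align*}
K_N+1\le C_0+M\int_0^T\big(a|\bar{X}(u)|^q+b\big){\rm d}u.
\end{align*}

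For $p\ge1$, raising to the $p$-th power, using $(c+d)^p\le2^{p-1}(c^p+d^p)$ and Hölder's inequality in $u$, taking expectations, and invoking Theorem \ref{numerical solu. bounded} (precisely, the uniform moment bound $\sup_{0\le u\le T}\mathbb{E}|\bar{X}(u)|^{qp}\le C$ obtained in its proof), one gets $\mathbb{E}(K_N+1)^p\le C+CM^p\le CM^p$ (using $M\ge1$), uniformly in $N$; Fatou's lemma then yields $\mathbb{E}(N_T+1)^p\le CM^p$, and in particular $N_T<\infty$ a.s. The range $0<p<1$ follows from the $p=1$ case together with Jensen's inequality, $\mathbb{E}(N_T+1)^p\le(\mathbb{E}(N_T+1))^p\le CM^p$.

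The only real point of the argument is the bookkeeping above: one must see that the number of ``constraint-limited'' steps is bounded by a constant \emph{independent of $M$} --- this is exactly where the role of the term $[t_k]+1-t_k$, which forces every integer in $[0,T]$ to be a grid point, comes in --- while each remaining step contributes the clean lower bound $\Delta_k\ge(M(a|X_k|^q+b))^{-1}$; once this is in place the estimate collapses onto the already-established $p$-th moment bound for the scheme. The a priori finiteness of $N_T$ is the only technical nuisance, and it is dissolved by the truncation-and-Fatou device above (alternatively, continuity and strict positivity of $\Delta(\cdot)$ in Assumption \ref{step size} preclude accumulation of the times $t_k$).
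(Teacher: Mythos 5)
Your proof is correct and follows essentially the same route as the paper's: in both arguments the key point is that every step not capped by the integer grid satisfies $1\le M(a\vert X_k\vert^q+b)\Delta_k$, so summing converts the step count into $M\int_0^T(a\vert \bar{X}(u)\vert^q+b)\,{\rm d}u$ plus at most $\lceil T\rceil+1$ capped steps, after which the moment bound of Theorem \ref{numerical solu. bounded} finishes. The differences are only organizational — the paper iterates over the unit intervals $[m,m+1]$ whereas you count all capped steps at once via injectivity of $k\mapsto t_{k+1}$, and your truncation-and-Fatou device makes the a priori finiteness of $N_T$ explicit where the paper leaves it implicit.
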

\begin{proof}\rm
Define $N_1=\{k\in\mathbb{N}:t_k=1\}$, according to the definition of $\Delta_k$ and $t_k$, we know that
\begin{itemize}
\item If $\frac{\Delta(X_{0})}{M}\ge 1$, then $\Delta_{0}=1$, $N_1=1$.
\item Otherwise, $N_1\ge 2$, and  
\begin{align*}
\Delta_k=\min\left\{\frac{\Delta(X_k)}{M},1-t_k\right\}=\frac{\Delta(X_k)}{M}, ~\forall k=0,\dots, N_1-2,
\end{align*}
Using Assumption \ref{step size lower bound}, we can deduce that 
\begin{align*}
N_1-1=&\sum_{k=0}^{N_1-2}1=\sum_{k=0}^{N_1-2}\frac{\Delta_k}{\Delta_k}= \sum_{k=0}^{N_1-2}\frac{M}{\Delta(X_k)}\Delta_k\le  \sum_{k=0}^{N_1-2}M(a\vert X_k\vert^q+b)\Delta_k\\
&=\int_{0}^{t_{N_1-1}}M(a\vert \bar{X}(t)\vert^q+b){\rm d}t\le M\int_{0}^{1}(a\vert \bar{X}(t)\vert^q+b){\rm d}t.
\end{align*}
\end{itemize}
Hence, we always have $N_1\le 1+M\int_{0}^{1}(a\vert \bar{X}(t)\vert^q+b){\rm d}t$.

 Similarly, let $N_2=\{k\in\mathbb{N}:t_k=2\}$.
 \begin{itemize}
\item If $\frac{\Delta(X_{N_1})}{M}\ge 1$, then $\Delta_{N_1}=1$, $N_2=N_1+1$.
\item Otherwise, $N_2\ge N_1+2$, and  
\begin{align*}
\Delta_k=\min\left\{\frac{\Delta(X_k)}{M},2-t_k\right\}=\frac{\Delta(X_k)}{M}, ~\forall k=N_1,\dots, N_2-2,
\end{align*}
Using Assumption \ref{step size lower bound}, we can deduce that 
\begin{align*}
N_2-1=&\sum_{k=0}^{N_2-2}1=\sum_{k=0}^{N_1-1}1+\sum_{k=N_1}^{N_2-2}\frac{\Delta_k}{\Delta_k}=N_1+ \sum_{k=N_1}^{N_2-2}\frac{M}{\Delta(X_k)}\Delta_k\\
&\le 1+M\int_{0}^{1}(a\vert \bar{X}(t)\vert^q+b){\rm d}t+ \sum_{k=N_1}^{N_2-2}M(a\vert X_k\vert^q+b)\Delta_k\\
&=1+\int_{0}^{t_{N_2-1}}M(a\vert \bar{X}(t)\vert^q+b){\rm d}t\le 1+M\int_{0}^{2}(a\vert \bar{X}(t)\vert^q+b){\rm d}t.
\end{align*}
\end{itemize}
Hence, $N_2\le 2+M\int_{0}^{2}(a\vert \bar{X}(t)\vert^q+b){\rm d}t$.

Repeating the process above, it can be known that 
\begin{align*}
N_{[T]+1}\le [T]+1+M\int_{0}^{[T]+1}(a\vert \bar{X}(t)\vert^q+b){\rm d}t\le T+1+M\int_{0}^{T+1}(a\vert \bar{X}(t)\vert^q+b){\rm d}t.
\end{align*}
Since $T<[T]+1$, one has
\begin{align*}
N_T\le N_{[T]+1}\le T+1+M\int_{0}^{T+1}(a\vert \bar{X}(t)\vert^q+b){\rm d}t.
\end{align*}
Consequently, with the help of H\"older's inequality, 
\begin{align*}
\mathbb{E}(N_T+1)^p\le& \mathbb{E}\left(T+2+M\int_{0}^{T+1}(a\vert \bar{X}(t)\vert^q+b){\rm d}t\right)^p\\
\le& 2^{p-1}(T+2)^p+2^{p-1}M^p\mathbb{E}\left(\int_{0}^{T+1}(a\vert \bar{X}(t)\vert^q+b){\rm d}t\right)^p\\\le& 2^{p-1}(T+2)^p+2^{p-1}M^p(T+1)^{p-1}\int_{0}^{T+1}\mathbb{E}(a\vert \bar{X}(t)\vert^q+b)^p{\rm d}t.
\end{align*}
By Theorem \ref{numerical solu. bounded}, for any given $M\in\mathbb{N}_+$, we have
\begin{align*}
\mathbb{E}(N_T+1)^p\le CM^p.
\end{align*}
The proof is completed.
\end{proof}
\section{Strong convergence rate of the adaptive Milstein method}\label{convergence rate}
In order to get the strong convergence rate of the adaptive Milstein scheme, we impose some stronger assumptions on the coefficients.
\begin{assumption}\label{f condition_2}
There exists a constant $L_1>0$ such that 
\begin{align*}
\langle x-\bar{x},f(x, y)-f(\bar{x},\bar{y})\rangle \le& L_1(|x-\bar{x}|^2+|y-\bar{y}|^2)
\end{align*} 
for all $x, \bar{x}, y, \bar{y}\in\mathbb{R}^n$.
\end{assumption}

\begin{assumption}\label{g condition_2}
There exists a constant $K_3>0$ such that 
\begin{align*}
\vert g_j(x, y)-g_j(\bar{x},\bar{y})\vert \le K_3(|x-\bar{x}|+|y-\bar{y}|)
\end{align*} 
for all $x, y, \bar{x}, \bar{y}\in\mathbb{R}^n$.
\end{assumption}
\begin{assumption}\label{assumption_4.1}
There exist positive constants $L_2$ and $\gamma\ge 1$ such that 
\begin{align*}
\vert f(x,y)-f(\bar{x},\bar{y})\vert\le L_2(1+|x|^{\gamma}+|y|^{\gamma}+|\bar{x}|^{\gamma}+|\bar{y}|^{\gamma})(\vert x-\bar{x}\vert+\vert y-\bar{y}\vert)
\end{align*} 
for all $x, y, \bar{x}, \bar{y}\in\mathbb{R}^n$.
\end{assumption}
\begin{remark}\label{remark_2}
According to Assumptions \ref{g condition_2} and \ref{assumption_4.1}, it is easy to get that
\begin{align*}
\vert g_j(x,y)\vert\le \tilde{K}(1+\vert x\vert+\vert y\vert),\quad\vert f(x,y)\vert\le \tilde{L}(1+\vert x\vert^{\gamma+1}+\vert y\vert^{\gamma+1}),
\end{align*} 
and 
\begin{align*}
\|D_xg_j(x,y)\|\le K_3,\quad\|D_xf(x,y)\|\le 2L_2(1+\vert x\vert^{\gamma}+\vert y\vert^{\gamma}),
\end{align*} 
where $\tilde{K}=K_3+\max_{j=1,\cdots,d}\vert g_j(0,0)\vert,~\tilde{L}=3L_2+\vert f(0,0)\vert$.
\end{remark}
\begin{assumption}\label{assumption_4.2}
There exist positive constants $L_3$ and $K_4$ such that 
\begin{align*}
\|D_{xx} f(x, y)\|_{T_3} \le& L_3(1+|x|^{\gamma-1}+|y|^{\gamma}),\\
\|D_{xx} g_j(x, y)\|_{T_3} \le& K_4
\end{align*} 
for all $x, y\in\mathbb{R}^n$ and $j=1,2,\dots,d$.
\end{assumption}
\begin{lemma}\label{e_Delta(t)}
Let Assumptions \ref{g condition_2} and \ref{assumption_4.1} hold. Then for any $t\in[0,T]$ and $p>0$, 
\begin{align*}
\mathbb{E}\left(\vert X(t)-\bar{X}(t)\vert ^p\vert\mathcal{F}_{\underline{t}}\right)\le C(1+\vert X(\underline{t})\vert^{(\gamma+1) p}+\vert X([\underline{t}])\vert^{(\gamma+1) p})(t-\underline{t})^{\frac{p}{2}},~a.s.
\end{align*}
\end{lemma}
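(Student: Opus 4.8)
The plan is to read off $X(t)-\bar X(t)$ directly from the one-step representation \eqref{continuous Milstein}: for $t\in[t_k,t_{k+1})$ we have $\underline t=t_k$, $\bar X(t)=X(\underline t)=X_k$ and $X([\underline t])=X_{m_k}$, and subtracting $X_k$ from \eqref{continuous Milstein} (noting that the diffusion integrand is $\mathcal F_{t_k}$-measurable and independent of the integration variable) gives
\begin{align*}
X(t)-\bar X(t)=f(X_k,X_{m_k})(t-t_k)+\sum_{j=1}^d g_j(X_k,X_{m_k})\Delta B^j(t)+\sum_{j,r=1}^d D_xg_j(X_k,X_{m_k})g_r(X_k,X_{m_k})\,I_{rj}^{t_k,t}.
\end{align*}
I would first treat the case $p\ge 2$, apply the elementary power inequality $\vert\sum_{i=1}^n a_i\vert^p\le n^{p-1}\sum_{i=1}^n\vert a_i\vert^p$, take $\mathbb E(\,\cdot\,\vert\mathcal F_{t_k})$, and estimate the three contributions separately, using throughout that $X_k,X_{m_k},t_k,t_{k+1}$ are $\mathcal F_{t_k}$-measurable (exactly as in the proof of Theorem \ref{numerical solu. bounded}) and that $t-t_k\le\Delta_k\le 1$, so that any power $(t-t_k)^{q}$ with $q\ge p/2$ is bounded by $(t-t_k)^{p/2}$.

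For the drift term I would use $\vert f(X_k,X_{m_k})\vert^p\le C(1+\vert X_k\vert^{(\gamma+1)p}+\vert X_{m_k}\vert^{(\gamma+1)p})$ from Remark \ref{remark_2}, which together with $(t-t_k)^p\le (t-t_k)^{p/2}$ already yields the desired bound. For the diffusion term, Remark \ref{remark_2} gives $\vert g_j(X_k,X_{m_k})\vert^p\le C(1+\vert X_k\vert^p+\vert X_{m_k}\vert^p)$ and Lemma \ref{|B(t_1)-B(t_2)|^p} gives $\mathbb E(\vert\Delta B^j(t)\vert^p\vert\mathcal F_{t_k})=\gamma_p(t-t_k)^{p/2}$. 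For the Milstein correction term I would bound $\vert D_xg_j(X_k,X_{m_k})g_r(X_k,X_{m_k})\vert^p\le C(1+\vert X_k\vert^p+\vert X_{m_k}\vert^p)$ via Remark \ref{remark_2} ($\|D_xg_j\|\le K_3$, $\vert g_r\vert\le\tilde K(1+\vert X_k\vert+\vert X_{m_k}\vert)$) and the Cauchy--Schwarz inequality, and control the iterated integral by $\mathbb E(\vert I_{rj}^{t_k,t}\vert^p\vert\mathcal F_{t_k})\le C(t-t_k)^p$ --- either from the decomposition \eqref{I_rj^t_k,t} combined with Lemmas \ref{|B(t_1)-B(t_2)|^p} and \ref{Levy area approximation}, or by writing $I_{rj}^{t_k,t}=\int_{t_k}^t\Delta B^r(u)\,{\rm d}B^j(u)$ and applying the Burkholder--Davis--Gundy and Doob maximal inequalities together with Lemma \ref{|B(t_1)-B(t_2)|^p}. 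Summing the three estimates and using $\gamma\ge 1$ (so $1+\vert X_k\vert^p+\vert X_{m_k}\vert^p\le C(1+\vert X_k\vert^{(\gamma+1)p}+\vert X_{m_k}\vert^{(\gamma+1)p})$) gives the claim for $p\ge 2$.

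For $0<p<2$ I would simply invoke the conditional Jensen inequality $\mathbb E(\vert X(t)-\bar X(t)\vert^p\vert\mathcal F_{\underline t})\le\big(\mathbb E(\vert X(t)-\bar X(t)\vert^2\vert\mathcal F_{\underline t})\big)^{p/2}$ and substitute the $p=2$ bound, using the subadditivity of $u\mapsto u^{p/2}$ to keep the $(\gamma+1)p$ exponent. The only genuinely delicate point is the iterated-integral term $I_{rj}^{t_k,t}$: one must handle it either through the L\'evy-area splitting \eqref{I_rj^t_k,t} or a BDG estimate, and one must keep track of the fact that $t_k$ and $t_{k+1}$ are $\mathcal F_{t_k}$-measurable stopping times so that the conditional moment bounds of Lemmas \ref{|B(t_1)-B(t_2)|^p} and \ref{Levy area approximation} are actually applicable with the conditioning $\sigma$-field $\mathcal F_{\underline t}$; everything else is a routine combination of these lemmas with the coefficient growth bounds of Remark \ref{remark_2}.
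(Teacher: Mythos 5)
Your proposal is correct and follows essentially the same route as the paper: the same one-step decomposition of $X(t)-\bar{X}(t)$ from \eqref{continuous Milstein} together with the L\'evy-area splitting \eqref{I_rj^{t_k,t}}, the power inequality, the coefficient growth bounds of Remark \ref{remark_2}, and the conditional moment estimates of Lemmas \ref{|B(t_1)-B(t_2)|^p} and \ref{Levy area approximation}, with Young's inequality (equivalently your use of $\gamma\ge 1$) to consolidate the polynomial factors. The only cosmetic differences are that you offer a BDG alternative for the iterated integral and treat $0<p<2$ separately via conditional Jensen, whereas the paper absorbs all $p>0$ into the generic constant.
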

\begin{proof}\rm
For any $t\in[0,T]$, there are always $k\in\mathbb{N}$ such that $t\in [t_k,t_{k+1})$, by \eqref{continuous Milstein} and \eqref{I_rj^t_k,t}, one has
\begin{align*}
&\vert X(t)-\bar{X}(t)\vert ^p\\
& \le C\left\vert  f(X_k,X_{m_k})(t-t_k)\right\vert ^p+C\left\vert \sum_{j=1}^d g_j(X_k,X_{m_k}) (\Delta B^j(t))\right\vert ^p\\
&\quad+C\left\vert\sum_{j=1}^d D_xg_j(X_k,X_{m_k})g_j(X_k,X_{m_k})\left((\Delta B^j(t))^2-(t-t_k)\right)\right\vert ^p\\
&\quad+C\left\vert\sum_{\substack{j,r=1\\ j<r}}^d \left(D_xg_j(X_k,X_{m_k})g_r(X_k,X_{m_k})+D_xg_r(X_k,X_{m_k})g_j(X_k,X_{m_k})\right)\Delta B^j(t)\Delta B^r(t)\right\vert ^p\\
&\quad+C\left\vert\sum_{\substack{j,r=1\\ j<r}}^d \left(D_xg_r(X_k,X_{m_k})g_j(X_k,X_{m_k})-D_xg_j(X_k,X_{m_k})g_r(X_k,X_{m_k})\right)A_{jr}^{t_k,t}\right\vert ^p
\end{align*}
Then
\begin{align*}
&\mathbb{E}\left(\vert X(t)-\bar{X}(t)\vert ^p\vert\mathcal{F}_{t_k}\right)\\
&\le C\left\vert  f(X_k,X_{m_k})\right\vert ^p\left\vert (t-t_k)\right\vert ^p+C\sum_{j=1}^d\vert g_j(X_k,X_{m_k}) \vert^p\mathbb{E}\left(\vert \Delta B^j(t)\vert ^p\vert\mathcal{F}_{t_k}\right)\\
&\quad+C\sum_{j=1}^d\vert D_xg_j(X_k,X_{m_k})g_j(X_k,X_{m_k})\vert ^p\Big(\mathbb{E}\left(\vert \Delta B^j(t)\vert ^{2p}\vert\mathcal{F}_{t_k}\right)+(t-t_k)^p\Big)\\
&\quad+C\sum_{\substack{j,r=1\\ j<r}}^d\left\vert D_xg_j(X_k,X_{m_k})g_r(X_k,X_{m_k})+D_xg_r(X_k,X_{m_k})g_j(X_k,X_{m_k})\right\vert ^p\\
&~~\qquad\mathbb{E}\left(\vert \Delta B^j(t)\vert ^p\vert\mathcal{F}_{t_k}\right)\mathbb{E}\left(\vert \Delta B^r(t)\vert ^p\vert\mathcal{F}_{t_k}\right)\\
&\quad+C\sum_{\substack{j,r=1\\ j<r}}^d\left\vert D_xg_r(X_k,X_{m_k})g_j(X_k,X_{m_k})-D_xg_j(X_k,X_{m_k})g_r(X_k,X_{m_k})\right\vert ^p\mathbb{E}\left(\vert A_{jr}^{t_k,t}\vert ^p\vert\mathcal{F}_{t_k}\right).
\end{align*}
Applying Remark \ref{remark_2}, Lemma \ref{|B(t_1)-B(t_2)|^p} and \ref{Levy area approximation}, as well as Young's inequality, one can arrive at
\begin{align*}
&\mathbb{E}\left(\vert X(t)-\bar{X}(t)\vert ^p\vert\mathcal{F}_{t_k}\right)\\
\le &C(1+\vert X_k\vert^{(\gamma+1) p}+\vert X_{m_k}\vert^{(\gamma+1) p})(t-t_k)^{p}+C(1+\vert X_k\vert^p+\vert X_{m_k}\vert^p)(t-t_k)^{\frac{p}{2}}\\
&+C(1+\vert X_k\vert^p+\vert X_{m_k}\vert^p)(t-t_k)^{p}\\
\le &C(1+\vert X_k\vert^{(\gamma+1) p}+\vert X_{m_k}\vert^{(\gamma+1) p})(t-t_k)^{\frac{p}{2}},~a.s.
\end{align*}
The proof is completed.
\end{proof}

In the following, let $\varphi:\mathbb{R}^n\times\mathbb{R}^n\to \mathbb{R}^n$ be twice differentiable with respect to the first component, then according to the Taylor formula, 
\begin{align*}
\varphi(x,y)-\varphi(\bar{x},y)=D_x\varphi(\bar{x},y)(x-\bar{x})+R(\varphi)(x-\bar{x}), ~\forall x,y,\bar{x}\in\mathbb{R}^n,
\end{align*}
where 
\begin{align*}
R(\varphi)(x-\bar{x})=&\frac{1}{2}(x-\bar{x})^{\rm T}D_{xx}\varphi(\bar{x}+\theta(x-\bar{x}),y)(x-\bar{x}),
\end{align*}
with $\theta\in (0,1)$. Note that $X([t])=\bar{X}([t])$ for all $t\ge 0$, hence
\begin{align}\label{def_Taylor_2}
\varphi(X(t),X([t]))-\varphi(\bar{X}(t),\bar{X}([t]))=D_x\varphi(\bar{X}(t),X([t]))(X(t)-\bar{X}(t))+R(\varphi)(X(t)-\bar{X}(t)).
\end{align}
According to \eqref{continuous Milstein}, one has
\begin{align*}
X(t)-\bar{X}(t)=&\int_{\underline{t}}^t f(\bar{X}(u),\bar{X}([u])) {\rm d}u+\sum_{j=1}^d\int_{\underline{t}}^t g_{j}(\bar{X}(u),\bar{X}([u])) {\rm d}B^j(u)\\
&+\sum_{j,r=1}^d \int_{\underline{t}}^t D_xg_j(\bar{X}(u),\bar{X}([u]))g_r(\bar{X}(u),\bar{X}([u]))\Delta B^r(u){\rm d}B^j(u).
\end{align*}
Define
\begin{align}\label{def_bar_R}
&\bar{R}(\varphi)(X(t)-\bar{X}(t)):=R(\varphi)(X(t)-\bar{X}(t))+D_x\varphi(\bar{X}(t),X([t])) \int_{\underline{t}}^t f(\bar{X}(u),\bar{X}([u])) {\rm d}u\notag\\
&\quad+D_x \varphi(\bar{X}(t),X([t]))\sum_{j,r=1}^d \int_{\underline{t}}^t D_xg_j(\bar{X}(u),\bar{X}([u]))g_r(\bar{X}(u),\bar{X}([u]))\Delta B^r(u){\rm d}B^j(u),
\end{align}
which gives
\begin{align}\label{fai_X-fai_bar_X}
&\varphi(X(t),X([t]))-\varphi(\bar{X}(t),\bar{X}([t]))\notag\\
&=D_x\varphi(\bar{X}(t),X([t]))\sum_{j=1}^d\int_{\underline{t}}^t g_{j}(\bar{X}(u),\bar{X}([u])) {\rm d}B^j(u)+\bar{R}(\varphi)(X(t)-\bar{X}(t)).
\end{align}
\begin{lemma}\label{lemma_R}
Let Assumptions \ref{step size}, \ref{g condition_2}-\ref{assumption_4.2} hold. Then for any $t\in[0,T]$ and $p>0$,
\begin{align*}
&\mathbb{E}\left(\vert R(\varphi)(X(t)-\bar{X}(t))\vert^p\vert\mathcal{F}_{\underline{t}}\right)\vee\mathbb{E}\left(\vert \bar{R}(\varphi)(X(t)-\bar{X}(t))\vert^p\vert\mathcal{F}_{\underline{t}}\right)\\
\le &C\Big(1+\vert X(\underline{t})\vert^{\bar{p}}+\vert X([\underline{t}])\vert^{\bar{p}}\Big)(t-\underline{t})^{p},~a.s.
\end{align*}
for $\varphi=f, g_j$, $j=1,2,\dots,d$, where $\bar{p}=(\lambda+2\gamma+2) p$.
\end{lemma}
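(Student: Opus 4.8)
The plan is to prove both estimates together, starting from the Taylor expansion that defines $R(\varphi)$ and $\bar R(\varphi)$, and reducing first to the case $p\ge 2$: for $0<p<2$ the bound follows from the conditional Jensen inequality $\mathbb{E}(|Z|^{p}\,|\,\mathcal{F}_{\underline t})\le\big(\mathbb{E}(|Z|^{2}\,|\,\mathcal{F}_{\underline t})\big)^{p/2}$ together with $(1+a+b)^{p/2}\le 1+a^{p/2}+b^{p/2}$. Fixing $k$ with $t\in[t_k,t_{k+1})$, I would repeatedly exploit that on this cell $\bar X(u)=X_k$ and $\bar X([u])=X_{m_k}$ are constant and $\mathcal{F}_{t_k}$-measurable: indeed $\Delta_k\le[t_k]+1-t_k$ prevents crossing an integer, so $t_{k+1}\le[t_k]+1$ and $[u]=[t_k]$ for all $u\in[t_k,t_{k+1})$.

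For the pure remainder I would use $|R(\varphi)(X(t)-\bar X(t))|\le\tfrac12\|D_{xx}\varphi(\bar X(t)+\theta(X(t)-\bar X(t)),X([t]))\|_{T_3}\,|X(t)-\bar X(t)|^{2}$ with $\theta\in(0,1)$. When $\varphi=g_j$, Assumption \ref{assumption_4.2} bounds the tensor norm by $K_4$, so the $p$-th conditional moment follows immediately from Lemma \ref{e_Delta(t)} applied with exponent $2p$, giving $C(1+|X_k|^{2(\gamma+1)p}+|X_{m_k}|^{2(\gamma+1)p})(t-t_k)^{p}$. When $\varphi=f$, the tensor norm is $\le L_3(1+|\xi|^{\gamma-1}+|X([t])|^{\gamma})$ where $\xi=\bar X(t)+\theta(X(t)-\bar X(t))$ is a convex combination of $X_k$ and $X(t)$, hence $|\xi|\le|X_k|+|X(t)-\bar X(t)|$, and therefore $|R(f)(X(t)-\bar X(t))|\le C\big(1+|X_k|^{\gamma-1}+|X(t)-\bar X(t)|^{\gamma-1}+|X_{m_k}|^{\gamma}\big)|X(t)-\bar X(t)|^{2}$. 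Taking $p$-th conditional moments, pulling the $\mathcal{F}_{t_k}$-measurable factors $|X_k|^{\gamma-1}$ and $|X_{m_k}|^{\gamma}$ outside, and invoking Lemma \ref{e_Delta(t)} once for $|X(t)-\bar X(t)|^{2p}$ and once for $|X(t)-\bar X(t)|^{(\gamma+1)p}$ produces terms carrying $(t-t_k)^{p}$ and $(t-t_k)^{(\gamma+1)p/2}$ respectively; since $\gamma\ge1$ and $t-t_k\le1$, both reduce to $(t-t_k)^{p}$, and the accumulated polynomial degrees in $|X_k|$ and $|X_{m_k}|$ do not exceed $\bar p=(\lambda+2\gamma+2)p$.

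For $\bar R(\varphi)$ it remains to bound the two correction integrals in \eqref{def_bar_R}. Over the cell $[t_k,t_{k+1})$ the stochastic-integral correction equals $\sum_{j,r=1}^{d}D_xg_j(X_k,X_{m_k})g_r(X_k,X_{m_k})I_{rj}^{t_k,t}$ by constancy of the integrand, and I would bound $|D_xg_j(X_k,X_{m_k})g_r(X_k,X_{m_k})|\le K(1+|X_k|+|X_{m_k}|)$ via \eqref{D_xg_jg_r}; using $I_{rj}^{t_k,t}+I_{jr}^{t_k,t}=\Delta B^j(t)\Delta B^r(t)$ and $A_{jr}^{t_k,t}=\tfrac12(I_{jr}^{t_k,t}-I_{rj}^{t_k,t})$ to express $I_{rj}^{t_k,t}$ through $(\Delta B^j(t))^{2}-(t-t_k)$ when $r=j$ and through $\Delta B^j(t)\Delta B^r(t)$ and $A_{jr}^{t_k,t}$ otherwise, Lemmas \ref{|B(t_1)-B(t_2)|^p} and \ref{Levy area approximation} give $\mathbb{E}(|I_{rj}^{t_k,t}|^{p}\,|\,\mathcal{F}_{t_k})\le C(t-t_k)^{p}$. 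For the drift correction I would use $|D_x\varphi(\bar X(t),X([t]))|\le K_3$ (if $\varphi=g_j$) or $\le 2L_2(1+|X_k|^{\gamma}+|X_{m_k}|^{\gamma})$ (if $\varphi=f$) from Remark \ref{remark_2}, together with $\big|\int_{\underline t}^t f(X_k,X_{m_k})\,{\rm d}u\big|\le \tilde L (t-t_k)(1+|X_k|^{\gamma+1}+|X_{m_k}|^{\gamma+1})$. Both corrections then contribute terms of the shape $C(1+|X_k|^{m}+|X_{m_k}|^{m})(t-t_k)^{p}$, and combining them with the bound for $R(\varphi)$ via the triangle inequality applied to \eqref{def_bar_R} and taking conditional expectations yields the claim once $m\le\bar p$ is checked in every summand.

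The only genuinely delicate step is this exponent bookkeeping for $\varphi=f$: because $D_{xx}f$ grows polynomially and its argument contains $X(t)$ rather than only $\bar X(t)=X_k$, one must pair the prefactor $|X(t)-\bar X(t)|^{\gamma-1}$ with the quadratic factor $|X(t)-\bar X(t)|^{2}$, route both through Lemma \ref{e_Delta(t)}, and verify that the exponent of $t-t_k$ never drops below $p$ (this is precisely where $\gamma\ge1$ enters) while the total polynomial degree in $|X_k|$ and $|X_{m_k}|$ stays within $\bar p$. Everything else is a routine combination of H\"older's and the Cauchy-Schwarz inequality with the a priori moment estimates of Theorem \ref{numerical solu. bounded} and Lemma \ref{e_Delta(t)}.
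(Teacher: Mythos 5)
Your proposal is correct in substance and follows the paper's overall strategy (Taylor remainder bounded through Assumption \ref{assumption_4.2} and Lemma \ref{e_Delta(t)}; the two correction terms in \eqref{def_bar_R} bounded through Remark \ref{remark_2}, \eqref{D_xg_jg_r}, the L\'evy-area decomposition \eqref{I_rj^t_k,t}, and Lemmas \ref{|B(t_1)-B(t_2)|^p} and \ref{Levy area approximation}), but you diverge from the paper at the one genuinely delicate point you yourself identify, namely the factor $\|D_{xx}f(\xi,X([t]))\|_{T_3}$ with $\xi=\bar X(t)+\theta(X(t)-\bar X(t))$. The paper keeps $|X(t)|^{(\gamma-1)p}$ intact, applies the conditional Cauchy--Schwarz inequality to separate it from $|X(t)-\bar X(t)|^{2p}$, and then controls $\mathbb{E}\bigl(|X(t)|^{2(\gamma-1)p}\,\big|\,\mathcal{F}_{t_k}\bigr)$ by the conditional moment bound \eqref{E(X(t)|F_(t_k))} of Theorem \ref{numerical solu. bounded}, arriving at the degree $(3\gamma+2)p$. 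You instead split $|\xi|\le|X_k|+|X(t)-\bar X(t)|$ and push the resulting $|X(t)-\bar X(t)|^{(\gamma+1)p}$ through Lemma \ref{e_Delta(t)}. This is a valid alternative and buys you a purely pathwise argument that avoids invoking Theorem \ref{numerical solu. bounded} inside the remainder estimate; what it costs is the exponent: Lemma \ref{e_Delta(t)} applied with exponent $(\gamma+1)p$ returns a polynomial of degree $(\gamma+1)^2p=(\gamma^2+2\gamma+1)p$ in $|X_k|$ and $|X_{m_k}|$, which exceeds $(3\gamma+2)p$ as soon as $\gamma>(1+\sqrt5)/2$. So your closing claim that the accumulated degree ``does not exceed $\bar p$'' is not verified by your route for large $\gamma$; you would either need the paper's Cauchy--Schwarz step at that point or need to enlarge $\bar p$. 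Since the statement's $\bar p=(\lambda+2\gamma+2)p$ contains an undefined $\lambda$ (evidently a typo for $\gamma$, matching the paper's $(3\gamma+2)p$), and since the only downstream use of the lemma is through Theorem \ref{numerical solu. bounded}, which bounds moments of every order, the discrepancy is harmless for the convergence theorem --- but you should state the larger exponent honestly rather than assert it fits within $\bar p$. The remaining ingredients (reduction to $p\ge2$ by conditional Jensen, the $K_4$ bound for $\varphi=g_j$, the $(2\gamma+1)p$-degree drift correction, and the $(\gamma+1)p$-degree iterated-integral correction) all match the paper and are sound.
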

\begin{proof}\rm
Take $\varphi=f$, for any $t\in [0,T]$, using the Cauchy-Schwarz inequality and Assumption \ref{assumption_4.2}, one has
\begin{align*}
&\vert R(f)(X(t)-\bar{X}(t))\vert^p\\
=&2^{-p}\left\vert (X(t)-\bar{X}(t))^{\rm T}D_{xx} f(\bar{X}(t)+\theta(X(t)-\bar{X}(t)),X([t]))(X(t)-\bar{X}(t))\right\vert^p\\
\le& 2^{-p}\left\| D_{xx} f(\bar{X}(t)+\theta(X(t)-\bar{X}(t)),X([t]))\right\|_{T_3}^p\vert X(t)-\bar{X}(t)\vert^{2p}\\
\le& C\left(1+\vert X(t) \vert^{(\gamma-1)p}+\vert\bar{X}(t)\vert^{(\gamma-1) p}+\vert X([t]) \vert^{\gamma p}\right)\vert X(t)-\bar{X}(t)\vert^{2p}.
\end{align*}
Since for any $t\in[0,T]$, there always exist $k\in\mathbb{N}$ such that $t\in[t_k,t_{k+1})$, which gives $\underline{t}=t_k$, hence $\bar{X}(t)=X(\underline{t})=X_k$, and $X([t])=X([\underline{t}])=X([t_k])=X_{m_k}$. By H\"older's inequality, Lemma \ref{e_Delta(t)}, Theorem \ref{numerical solu. bounded} and Young's inequality,  one can obtain that
\begin{align}\label{R_j}
&\mathbb{E}\left(\vert R(f)(X(t)-\bar{X}(t))\vert^p\vert\mathcal{F}_{t_k}\right)\notag\\
\le& C\left(\mathbb{E}\left(1+\vert X(t) \vert^{(\gamma-1) p}+\vert\bar{X}(t)\vert^{(\gamma-1) p}+\vert X([t]) \vert^{\gamma p}\right)^2\Big\vert\mathcal{F}_{t_k}\right)^{\frac{1}{2}}\left(\mathbb{E}\left(\vert X(t)-\bar{X}(t)\vert^{4p}\vert\mathcal{F}_{t_k}\right)\right)^{\frac{1}{2}}\notag\\
\le&  C\Big(1+\mathbb{E}\left(\vert X(t)\vert^{2(\gamma-1) p}\vert\mathcal{F}_{t_k}\right)+\vert X_k\vert^{2(\gamma-1) p}+\vert X_{m_k}\vert^{2\gamma p}\Big)^{\frac{1}{2}}(1+\vert X_k\vert^{2(\gamma+1) p}+\vert X_{m_k}\vert^{2(\gamma+1) p})\Delta_k^{p}\notag\\
\le&  C\Big(1+\vert X_k\vert^{2\gamma p}+\vert X_{m_k}\vert^{2\gamma p}\Big)^{\frac{1}{2}} (1+\vert X_k\vert^{2(\gamma+1) p}+\vert X_{m_k}\vert^{2(\gamma+1) p})\Delta_k^{p}\notag\\
\le&  C\Big(1+\vert X_k\vert^{(3\gamma+2) p}+\vert X_{m_k}\vert^{(3\gamma+2) p}\Big)\Delta_k^{p}, ~a.s.
\end{align}
Moreover, according to \eqref{def_bar_R}, we have
\begin{align}\label{bar_R_j}
&\mathbb{E}\left(\vert \bar{R}(f)(X(t)-\bar{X}(t))\vert^p\vert\mathcal{F}_{t_k}\right)\notag\\
\le &C\mathbb{E}\left(\vert R(f)(X(t)-\bar{X}(t))\vert^p\vert\mathcal{F}_{t_k}\right)+C\mathbb{E}\left(\left\vert D_xf(X_k,X_{m_k}) \int_{t_k}^t f(X_k,X_{m_k}) {\rm d}u\right\vert^p\bigg\vert\mathcal{F}_{t_k}\right)\notag\\
&+C\mathbb{E}\left(\left\vert D_x f(X_k,X_{m_k})\sum_{j,r=1}^d \int_{t_k}^t D_xg_j(X_k,X_{m_k})g_r(X_k,X_{m_k})\Delta B^r(u){\rm d}B^j(u)\right\vert^p\bigg\vert\mathcal{F}_{t_k}\right).
\end{align}
Using the Cauchy-Schwarz inequality, Young's inequality, and Remark \ref{remark_2}, one can deduce that
\begin{align}\label{bar_R_2}
&\mathbb{E}\left(\left\vert D_xf(X_k,X_{m_k}) \int_{t_k}^t f(X_k,X_{m_k}) {\rm d}u\right\vert^p\bigg\vert\mathcal{F}_{t_k}\right)\notag\\
\le&\| D_x f(X_k,X_{m_k})\|^{p}\vert f(X_k,X_{m_k})\vert^{p}\Delta_k^{p}\notag\\
\le& C\left(1+\vert X_k\vert^{\gamma p}+\vert X_{m_k}\vert^{\gamma p}\right)\left(1+\vert X_k\vert^{(\gamma+1) p}+\vert X_{m_k}\vert^{(\gamma+1) p}\right)\Delta_k^{p}\notag  \\
\le& C\left(1+\vert X_k\vert^{(2\gamma+1) p}+\vert X_{m_k}\vert^{(2\gamma+1) p}\right)\Delta_k^{p}.
\end{align}
Similarly, according to \eqref{I_rj^t_k,t}, applying Remark \ref{remark_2}, it yields
\begin{align} 
&\mathbb{E}\left(\left\vert D_x f(X_k,X_{m_k})\sum_{j,r=1}^d \int_{t_k}^t D_xg_j(X_k,X_{m_k})g_r(X_k,X_{m_k})\Delta B^r(u){\rm d}B^j(u)\right\vert^p\bigg\vert\mathcal{F}_{t_k}\right)\notag\\
\le &C\mathbb{E}\left(\left\vert D_x f(X_k,X_{m_k})\sum_{j=1}^d  D_xg_{j}(X_k,X_{m_k})g_{j}(X_k,X_{m_k})\left((\Delta  B^j(t))^2-\Delta_k\right)\right\vert^p\bigg\vert\mathcal{F}_{t_k}\right)\notag\\
&+C\mathbb{E}\Bigg(\Bigg\vert D_x f(X_k,X_{m_k})\sum_{j,r=1,j<r}^d  \Big(D_xg_j(X_k,X_{m_k})g_{r}(X_k,X_{m_k})\notag\\
&+D_xg_r(X_k,X_{m_k})g_{j}(X_k,X_{m_k})\Big)\Delta B^r(t)\Delta B^j(t)\Bigg\vert^p\bigg\vert\mathcal{F}_{t_k}\Bigg)\notag\\
&+C\mathbb{E}\Bigg(\Bigg\vert D_x f(X_k,X_{m_k})\sum_{j,r=1,j<r}^d  \Big(D_xg_r(X_k,X_{m_k})g_j(X_k,X_{m_k})\notag\\
&\qquad-D_xg_j(X_k,X_{m_k})g_r(X_k,X_{m_k})\Big)A_{jr}^{t_k,t}\Bigg\vert^p\bigg\vert\mathcal{F}_{t_k}\Bigg)\notag\\
\le &C\left\|D_x f(X_k,X_{m_k})\right\|^{p}\Bigg( \sum_{j=1}^d\|D_xg_{j}(X_k,X_{m_k})\|^p\vert g_{j}(X_k,X_{m_k})\vert^p\left(\mathbb{E}\left(\vert\Delta  B^j(t)\vert^{2p}\big\vert\mathcal{F}_{t_k}\right)+\Delta_k^p\right)\notag\\
&+\sum_{j,r=1,j\neq r}^d\|D_xg_r(X_k,X_{m_k})\|^p\vert g_{j}(X_k,X_{m_k})\vert^p\mathbb{E}\left(\left\vert  \Delta B^r(t)\right\vert^{p}\big\vert\mathcal{F}_{t_k}\right)\mathbb{E}\left(\vert  \Delta B^j(t)\vert^{p}\big\vert\mathcal{F}_{t_k}\right)\notag\\
&+\sum_{j,r=1,j\neq r}^d\|D_xg_r(X_k,X_{m_k})\|^p\vert g_{j}(X_k,X_{m_k})\vert^p\mathbb{E}\left(\vert  A_{jr}^{t_k,t}\vert^{p}\big\vert\mathcal{F}_{t_k}\right)\Bigg)\notag\\
\le&C\left(1+\vert X_k\vert^{(\gamma+1)p}+\vert X_{m_k}\vert^{(\gamma+1)p}\right)\Delta_k^{p},~a.s.\label{eq_1}
\end{align}
Combining \eqref{R_j}, \eqref{bar_R_j}, \eqref{bar_R_2} and \eqref{eq_1}, with the help of Young's inequality, yields
\begin{align*}
\mathbb{E}\left(\vert \bar{R}(f)(X(t)-\bar{X}(t))\vert^p\vert\mathcal{F}_{\underline{t}}\right)\le C\Big(1+\vert X(\underline{t})\vert^{\bar{p}}+\vert X([\underline{t}])\vert^{\bar{p}}\Big)(t-\underline{t})^{p},~a.s.\quad \forall t\in[0,T],
\end{align*}
where $\bar{p}=(3\gamma+2) p$. Repeating the process above, for all $j=1,2,\dots,d$, we can also obtain that
\begin{align*}
&\mathbb{E}\left(\vert R(g_j)(X(t)-\bar{X}(t))\vert^p\vert\mathcal{F}_{\underline{t}}\right)\vee\mathbb{E}\left(\vert \bar{R}(g_j)(X(t)-\bar{X}(t))\vert^p\vert\mathcal{F}_{\underline{t}}\right)\\
\le& C\Big(1+\vert X(\underline{t})\vert^{\bar{p}}+\vert X([\underline{t}])\vert^{\bar{p}}\Big)(t-\underline{t})^{p},~a.s.
\end{align*}
for any $t\in[0,T]$. The proof is completed.
\end{proof}

\begin{theorem}
Suppose that the SDEPCA \eqref{SDEPCA} satisfies Assumptions \ref{f condition_2}-\ref{assumption_4.2}, $\Delta_k$ is taken as form \eqref{Delta_k} with $\Delta(\cdot)$ satisfies Assumption \ref{step size} and \ref{step size lower bound}. Then for any $p\ge 2$ and $M\ge 1$,
\begin{align*}
\sup_{0\le t\le T}\mathbb{E}\vert x(t)-X(t)\vert^p\le C\mathbb{E}(N_T+1)^{-p},
\end{align*}
where $N_T+1$ is the number of time steps required by a path approximation.
\end{theorem}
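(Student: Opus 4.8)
The plan is to estimate $\mathbb{E}\vert e(t)\vert^p$ with $e(t)=x(t)-X(t)$ directly by an It\^o energy estimate that exploits the one-sided Lipschitz structure of $f$ and the exact cancellation built into the Milstein correction term, and then to convert the resulting $M^{-p}$ bound into the stated form. It suffices to take $p$ to be a large even integer, so that $z\mapsto\vert z\vert^{p-2}z$ and $z\mapsto\vert z\vert^p$ are polynomials, the remaining range $p\ge2$ following by H\"older's inequality. Subtracting \eqref{SDEPCA} from \eqref{continuous Milstein}, inserting the intermediate arguments $X(u),X([u])$, and applying the Taylor identity \eqref{fai_X-fai_bar_X} to $\varphi=f$ and $\varphi=g_j$, the key observation is that each $g_r(\bar X(\cdot),\bar X([\cdot]))$ is constant on every cell $[t_k,t_{k+1})$, so the leading term of \eqref{fai_X-fai_bar_X} collapses to $\sum_r D_x\varphi(\bar X(u),\bar X([u]))g_r(\bar X(u),\bar X([u]))\Delta B^r(u)$; for $\varphi=g_j$ this exactly annihilates the Milstein correction term of \eqref{continuous Milstein}, and for $\varphi=f$ it leaves one ``dangerous'' drift contribution. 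Thus
\begin{align*}
e(t)=&\int_0^t\!\big(f(x(u),x([u]))-f(X(u),X([u]))\big){\rm d}u+\sum_{j=1}^d\int_0^t\!\big(g_j(x(u),x([u]))-g_j(X(u),X([u]))\big){\rm d}B^j(u)\\
&+\int_0^t\!\sum_{r=1}^d D_xf(\bar X(u),\bar X([u]))g_r(\bar X(u),\bar X([u]))\Delta B^r(u)\,{\rm d}u\\
&+\int_0^t\!\bar R(f)(X(u)-\bar X(u)){\rm d}u+\sum_{j=1}^d\int_0^t\!\bar R(g_j)(X(u)-\bar X(u)){\rm d}B^j(u),
\end{align*}
where the last line is the higher-order remainder controlled by Lemma \ref{lemma_R}.

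Applying It\^o's formula to $\vert e(t)\vert^p$ (after a routine localization removed at the end via the moment bounds of Lemma \ref{exact solu. bounded} and Theorem \ref{numerical solu. bounded}) and taking expectations, write $\varepsilon(t)=\sup_{0\le s\le t}\mathbb{E}\vert e(s)\vert^p$. The term $\langle e(u),f(x,x[\cdot])-f(X,X[\cdot])\rangle$ is bounded using Assumption \ref{f condition_2}, the It\^o quadratic-variation term using the global Lipschitz Assumption \ref{g condition_2}, and the two $\bar R$-terms using Lemma \ref{lemma_R}; since $\Delta_k\le T/M$ and $\mathbb{E}\vert e([u])\vert^p\le\varepsilon(u)$, Young's inequality reduces all of these to contributions of the form $C\int_0^t\varepsilon(u){\rm d}u$, an arbitrarily small multiple of $\varepsilon(t)$ (absorbed on the left), and $C(T/M)^p$.

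The heart of the matter — and what I expect to be the main obstacle — is the term $p\,\mathbb{E}\int_0^t\vert e(u)\vert^{p-2}\big\langle e(u),\sum_r D_xf(\bar X(u),\bar X([u]))g_r(\bar X(u),\bar X([u]))\Delta B^r(u)\big\rangle{\rm d}u$: a direct bound, treating $\Delta B^r(u)$ as being of size $(T/M)^{1/2}$, produces merely $(T/M)^{p/2}$, i.e.\ order $1/2$. To recover order $1$ one \emph{freezes} $\vert e(u)\vert^{p-2}e(u)$ at $\vert e(\underline u)\vert^{p-2}e(\underline u)$. On the cell containing $u$ one has $\underline u=t_k$, and $\Delta_k$, $e(t_k)$, $D_xf(X_k,X_{m_k})g_r(X_k,X_{m_k})$ are all $\mathcal F_{t_k}$-measurable, hence so is $\{t_k\le u<t_{k+1}\}=\{t_k\le u<t_k+\Delta_k\}$; optional stopping for Brownian motion then gives $\mathbb{E}\big[\mathbf 1_{\{t_k\le u<t_{k+1}\}}\Delta B^r(u)\,\big\vert\,\mathcal F_{t_k}\big]=0$, so the frozen part contributes \emph{exactly zero}. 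For the remainder $\vert e(u)\vert^{p-2}e(u)-\vert e(\underline u)\vert^{p-2}e(\underline u)=\Lambda(u)\big(e(u)-e(\underline u)\big)$ with $\Vert\Lambda(u)\Vert\le C(\vert e(u)\vert\vee\vert e(\underline u)\vert)^{p-2}$, I would substitute the error equation above for $e(u)-e(\underline u)$ over $[\underline u,u]$ and estimate each piece, invariably using Assumption \ref{assumption_4.1} to replace $\vert f(x)-f(X)\vert$ by a polynomially growing factor times $\vert e(v)\vert+\vert e([v])\vert$ — which supplies an extra power of $\vert e\vert$ — and Lemmas \ref{|B(t_1)-B(t_2)|^p}, \ref{Levy area approximation} and \ref{e_Delta(t)} to size $\vert\Delta B^r(u)\vert$, the L\'evy areas, and $\vert X(u)-\bar X(u)\vert$. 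Pulling a factor $(\mathbb{E}\vert e\vert^p)^{(p-k)/p}$ out by H\"older's inequality and then applying Young's inequality, each contribution is dominated by an arbitrarily small multiple of $\varepsilon(t)$ plus $C(T/M)^p$: every uncancelled increment $\Delta B$ appears either squared — with conditional expectation of order $T/M$ — or next to a companion factor of size $(T/M)^{1/2}$, while the superlinear coefficients ride along on powers of $\vert e\vert$ that have finite moments.

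Collecting everything yields $\varepsilon(t)\le C\int_0^t\varepsilon(u){\rm d}u+\tfrac12\varepsilon(t)+C(T/M)^p$ for $t\in[0,T]$; absorbing $\tfrac12\varepsilon(t)$ and applying the Gronwall inequality gives $\sup_{0\le t\le T}\mathbb{E}\vert x(t)-X(t)\vert^p\le CM^{-p}$. Finally, Jensen's inequality and Lemma \ref{N_T} (with $p=1$) give $\mathbb{E}(N_T+1)^{-p}\ge(\mathbb{E}(N_T+1))^{-p}\ge(CM)^{-p}$, so $M^{-p}\le C\mathbb{E}(N_T+1)^{-p}$, which is the claim. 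The delicate points are the freezing/optional-stopping device for the dangerous drift term — subtle precisely because the mesh is random and $u$ is in general not a stopping time — and the bookkeeping that keeps all superlinear growth confined to the Taylor remainders of Lemma \ref{lemma_R} rather than letting it hit an uncancelled $\Delta B$.
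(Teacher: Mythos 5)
Your proposal follows the paper's skeleton almost exactly --- the same It\^o-formula energy estimate, the same splitting of the drift and diffusion differences via the Taylor identity \eqref{fai_X-fai_bar_X} (so that the Milstein correction exactly cancels the leading part of $g_j(X)-g_j(\bar X)$ and only the $\bar R(g_j)$, $\bar R(f)$ remainders of Lemma \ref{lemma_R} survive), and a Gronwall argument at the end --- but it diverges genuinely at the one delicate point, the term the paper calls $J$, namely $p\,\mathbb{E}\int\vert e(u)\vert^{p-2}\langle e(u),\sum_r D_xf(X_k,X_{m_k})g_r(X_k,X_{m_k})\Delta B^r(u)\rangle\,{\rm d}u$. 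The paper disposes of $J$ by exchanging the order of integration and rewriting it as $\int_{t_k}^{t}\psi(v)\,{\rm d}B^j(v)$ with $\psi(v)=\int_v^t\vert e(u)\vert^{p-2}e(u)^{\rm T}D_xf\,g_j\,{\rm d}u$, then asserting that this integral has zero conditional expectation; note that $\psi(v)$ depends on the path on $[v,t]$ and is therefore not $\mathcal F_v$-measurable, so this step requires more than the square-integrability check the paper supplies. You instead freeze $\vert e(u)\vert^{p-2}e(u)$ at $\vert e(\underline u)\vert^{p-2}e(\underline u)$, annihilate the frozen part via the $\mathcal F_{t_k}$-conditional mean-zero property of $\Delta B^r(u)$ (legitimate, since $\{t_k\le u<t_{k+1}\}$ and the frozen coefficient are $\mathcal F_{t_k}$-measurable), and re-substitute the error equation for $e(u)-e(\underline u)$; this is the standard device in the Milstein literature, costs more bookkeeping, but stays entirely within adapted It\^o calculus. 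The bookkeeping does close, with one caveat you should make explicit: after splitting $f(x)-f(X_k,X_{m_k})$ and $g_j(x)-g_j(X_k,X_{m_k})-\sum_r D_xg_jg_r\Delta B^r$ into error-controlled pieces (via Assumptions \ref{g condition_2} and \ref{assumption_4.1}) and $O(\Delta_k)$-pieces (via Lemmas \ref{e_Delta(t)} and \ref{lemma_R}), every contribution must land as either $\vert e\vert^{p-1}\cdot O(\Delta_k)$ or $\vert e\vert^{p-2}\cdot O(\Delta_k^2)$ before Young's inequality is applied --- a cruder pairing such as $\vert e\vert^{p-2}$ against $O(\Delta_k^{3/2})$ would only give rate $3/4$. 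Finally, your conversion $M^{-p}\le C\,\mathbb{E}(N_T+1)^{-p}$ via Jensen's inequality and Lemma \ref{N_T} with exponent $1$ supplies a justification for the last inequality that the paper leaves implicit.
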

\begin{proof}\rm
For any $t\in [0,T]$, there always exist $k\in\mathbb{N}$ such that $t\in[t_k,t_{k+1})$, then for $p\ge 2$, according to \eqref{SDEPCA} and \eqref{continuous Milstein}, using It\^o's formula and the Cauchy-Schwarz inequality, we can arrive at
\begin{align*}
&\vert x(t)-X(t)\vert ^p\le \vert x(t_k)-X_k\vert ^p\\
&+\int_{t_k}^t p\vert x(u)-X(u)\vert ^{p-2}\left((x(u)-X(u))^TF(u)+\frac{p-1}{2}\sum_{j=1}^d\vert G_j(u)\vert ^2\right){\rm d}u\\
&+\sum_{j=1}^d \int_{t_k}^t p\vert x(u)-X(u)\vert ^{p-2}(x(u)-X(u))^TG_j(u){\rm d}B^j(u),
\end{align*}
where 
\begin{align*}
F(u)=&f(x(u),x([u]))-f(\bar{X}(u),\bar{X}([u])),\\
G_j(u)=&g_j(x(u),x([u]))-g_j(\bar{X}(u),\bar{X}([u]))-\sum_{r=1}^{d}D_xg_j(\bar{X}(u),\bar{X}([u]))g_r(\bar{X}(u),\bar{X}([u]))\Delta B^r(u).
\end{align*}
Then, 
\begin{align}\label{A}
\mathbb{E}\left(\vert x(t)-X(t)\vert ^p\vert\mathcal{F}_{t_k}\right)\le &\vert x(t_k)-X_k\vert ^p+\sum_{i=1}^4 A_i,
\end{align}
where
\begin{align*}
A_1=&p\mathbb{E}\left(\int_{t_k}^t \vert x(u)-X(u)\vert ^{p-2}\left(x(u)-X(u)\right)^{\rm T}\left( f(x(u),x([u]))-f(X(u),X([u]))\right){\rm d}u\bigg\vert\mathcal{F}_{t_k}\right),\\
A_2=&p\mathbb{E}\left(\int_{t_k}^t \vert x(u)-X(u)\vert ^{p-2}\left(x(u)-X(u)\right)^{\rm T}\left( f(X(u),X([u]))-f(\bar{X}(u),\bar{X}([u]))\right){\rm d}u\bigg\vert\mathcal{F}_{t_k}\right),\\
A_3=&p(p-1)\mathbb{E}\left(\int_{t_k}^t \vert x(u)-X(u)\vert ^{p-2}\sum_{j=1}^d \vert g_j(x(u),x([u]))-g_j(X(u),X([u]))\vert ^2{\rm d}u\bigg\vert\mathcal{F}_{t_k}\right),\\
A_4=&p(p-1)\mathbb{E}\bigg(\int_{t_k}^t \vert x(u)-X(u)\vert ^{p-2}\sum_{j=1}^d \Bigg\vert g_j(X(u),X([u]))-g_j(\bar{X}(u),\bar{X}([u]))\\
&-\sum_{r=1}^{d}D_xg_j(\bar{X}(u),\bar{X}([u]))g_r(\bar{X}(u),\bar{X}([u]))\Delta B^r(u)\Bigg\vert ^2{\rm d}u\bigg\vert\mathcal{F}_{t_k}\bigg).
\end{align*}
Applying Assumption \ref{f condition_2} and Young's inequality, it is easy to get that
\begin{align}\label{A_1}
A_1\le&pL_1\mathbb{E}\left(\int_{t_k}^t \vert x(u)-X(u)\vert ^{p-2}(\vert x(u)-X(u)\vert ^{2}+\vert x([u])-X([u])\vert ^{2}){\rm d}u\bigg\vert\mathcal{F}_{t_k}\right)\notag\\
\le & C\mathbb{E}\left(\int_{t_k}^t  \left(\vert x(u)-X(u)\vert ^{p}+\vert x([u])-X([u])\vert ^{p}\right){\rm d}u\bigg\vert\mathcal{F}_{t_k}\right).
\end{align}
Similarly, by Assumption \ref{g condition_2}, we can also get 
\begin{align}\label{A_3}
A_3\le C\mathbb{E}\left(\int_{t_k}^t  \left(\vert x(u)-X(u)\vert ^{p}+\vert x([u])-X([u])\vert ^{p}\right){\rm d}u\bigg\vert\mathcal{F}_{t_k}\right).
\end{align}
Next, we give an estimation for $A_4$. According to \eqref{fai_X-fai_bar_X}, let $\varphi=g_j$, we have
\begin{align*}
&g_j(X(u),X([u]))-g_j(\bar{X}(u),\bar{X}([u]))\\
&=D_x g_j(\bar{X}(u),\bar{X}([u]))\sum_{r=1}^d\int_{\underline{u}}^u g_r(\bar{X}(v),\bar{X}([v])) {\rm d}B^r(v)+\bar{R}(g_j)(X(u)-\bar{X}(u)).
\end{align*}
Since $\bar{X}(u)=\bar{X}(v)$, $\bar{X}([u])=\bar{X}([v])$ for $t_k\le v\le u\le t_{k+1}$, hence
\begin{align*}
A_4=p(p-1)\mathbb{E}\bigg(\int_{t_k}^t \vert x(u)-X(u)\vert ^{p-2}\sum_{j=1}^d \left\vert \bar{R}(g_j)(X(u)-\bar{X}(u))\right\vert ^2{\rm d}u\bigg\vert\mathcal{F}_{t_k}\bigg),
\end{align*}
and then according to Lemma \ref{lemma_R},
\begin{align}\label{A_4}
A_4\le&(p-1)\sum_{j=1}^d\mathbb{E}\bigg(\int_{t_k}^t \left((p-2)\vert x(u)-X(u)\vert ^{p}+2\left\vert \bar{R}(g_j)(X(u)-\bar{X}(u))\right\vert ^p\right){\rm d}u\bigg\vert\mathcal{F}_{t_k}\bigg)\notag\\
\le &C\mathbb{E}\bigg(\int_{t_k}^t\vert x(u)-X(u)\vert ^{p}{\rm d}u\bigg\vert\mathcal{F}_{t_k}\bigg)+C\sum_{j=1}^d\int_{t_k}^t\mathbb{E}\left(\vert \bar{R}(g_j)(X(u)-\bar{X}(u))\vert ^p\big\vert\mathcal{F}_{t_k}\right){\rm d}u\notag\\
\le &C\mathbb{E}\bigg(\int_{t_k}^t\vert x(u)-X(u)\vert ^{p}{\rm d}u\bigg\vert\mathcal{F}_{t_k}\bigg)+C\Delta_k^{p}\int_{t_k}^t\Big(1+\vert X_k\vert^{\bar{p}}+\vert X_{m_k}\vert^{\bar{p}}\Big){\rm d}u,~a.s.
\end{align}
In the following we give an estimation for $A_2$. According to \eqref{fai_X-fai_bar_X}, 
\begin{align*}
f(X(u),X([u]))-f(\bar{X}(u),\bar{X}([u]))=\phi(\bar{X}(u),\bar{X}([u]))+\bar{R}(f)(X(u)-\bar{X}(u)),
\end{align*}
where
\begin{align*}
\phi(\bar{X}(u),\bar{X}([u]))=D_x f(X_k,X_{m_k})\sum_{j=1}^d\int_{t_k}^u g_j(X_k,X_{m_k}){\rm d} B^j(v).
\end{align*}
Using H\"older's inequality and Lemma \ref{lemma_R}, one has
\begin{align}\label{A_2_1}
A_2\le&J+p\mathbb{E}\left(\int_{t_k}^t \vert x(u)-X(u)\vert ^{p-1}\vert \bar{R}(f)(X(u)-\bar{X}(u))\vert{\rm d}u\bigg\vert\mathcal{F}_{t_k}\right)\notag\\
\le&J+(p-1)\mathbb{E}\left(\int_{t_k}^t \vert x(u)-X(u)\vert ^p{\rm d}u\bigg\vert\mathcal{F}_{t_k}\right)+\int_{t_k}^t \mathbb{E}\left(\vert \bar{R}(f)(X(u)-\bar{X}(u))\vert ^p\vert\mathcal{F}_{t_k}\right){\rm d}u,
\end{align}
where 
\begin{align*}
J=&p\mathbb{E}\left(\int_{t_k}^t \vert x(u)-X(u)\vert ^{p-2}\left(x(u)-X(u)\right)^{\rm T}\phi(\bar{X}(u),\bar{X}([u])){\rm d}u\bigg\vert\mathcal{F}_{t_k}\right)\\
=&p\sum_{j=1}^d\mathbb{E}\left(\int_{t_k}^t\int_{t_k}^u \vert x(u)-X(u)\vert ^{p-2}\left(x(u)-X(u)\right)^{\rm T}D_x f(X_k,X_{m_k}) g_j(X_k,X_{m_k}) {\rm d}B^j(v){\rm d}u\bigg\vert\mathcal{F}_{t_k}\right).
\end{align*}
Using the integration by parts formula, one can arrive at
\begin{align*}
J=p\sum_{j=1}^d\mathbb{E}\left(\int_{t_k}^t\int_{v}^t \vert x(u)-X(u)\vert ^{p-2}\left(x(u)-X(u)\right)^{\rm T}D_x f(X_k,X_{m_k}) g_j(X_k,X_{m_k}) {\rm d}u{\rm d}B^j(v)\bigg\vert\mathcal{F}_{t_k}\right).
\end{align*}
Let
\begin{align*}
\psi(v)=\int_{v}^t \vert x(u)-X(u)\vert ^{p-2}\left(x(u)-X(u)\right)^{\rm T}D_x f(X_k,X_{m_k}) g_j(X_k,X_{m_k}) {\rm d}u,
\end{align*}
applying H\"older's inequality, the Cauchy-Schwarz inequality, Assumption \ref {assumption_4.1} and Remark \ref{remark_2}, we have
\begin{align*}
 \mathbb{E}\vert\psi(v)\vert^2\le& \mathbb{E}\left\{ (t-v)\int_{v}^t \vert x(u)-X(u)\vert^{2(p-1)}\|D_x f(X_k,X_{m_k}) \|^2\|g_j(X_k,X_{m_k})\| ^2{\rm d}u\right\}\\
\le& C\mathbb{E}\left\{ (t_{k+1}-t_k)\int_{v}^t (\vert x(u)\vert^{2(p-1)}+\vert X(u)\vert^{2(p-1)})(1+\vert X_k\vert^{2(\gamma+1)}+\vert X_{m_k}\vert^{2(\gamma+1)}){\rm d}u\right\}\\
\le& C\mathbb{E}\int_{0}^T (\vert x(u)\vert^{2(p-1)}+\vert X(u)\vert^{2(p-1)})(1+\vert X_k\vert^{2(\gamma+1)}+\vert X_{m_k}\vert^{2(\gamma+1)}){\rm d}u,
\end{align*}
then with the help of Lemma \ref{exact solu. bounded} and Theorem \ref{numerical solu. bounded}, one can know that $\mathbb{E}\vert  \psi(v)\vert^2<\infty$. Consequently, it follows
\begin{align*}
J=p\mathbb{E}\left(\int_{t_k}^{t} \psi(v) {\rm d}B(v)\bigg\vert\mathcal{F}_{t_k}\right)=0.
\end{align*}
Substituting this into \eqref{A_2_1}, using Lemma \ref{lemma_R}, one has
\begin{align}\label{A_2_2}
A_2\le &(p-1)\mathbb{E}\left(\int_{t_k}^t \vert x(u)-X(u)\vert ^p{\rm d}u\bigg\vert\mathcal{F}_{t_k}\right)+\int_{t_k}^t \mathbb{E}\left(\vert \bar{R}(f)(X(u)-\bar{X}(u))\vert ^p\vert\mathcal{F}_{t_k}\right){\rm d}u\notag\\
\le &C\mathbb{E}\left(\int_{t_k}^{t} \vert x(u)-X(u)\vert^p{\rm d}u\bigg\vert\mathcal{F}_{t_k}\right)+C\Delta_k^p\int_{t_k}^{t}(1+\vert X_k\vert^{\bar{p}}+\vert X_{m_k}\vert^{\bar{p}}){\rm d}u,~a.s.
\end{align}
Combining \eqref{A}-\eqref{A_3},\eqref{A_4} and \eqref{A_2_2}, we have
\begin{align*}
\mathbb{E} \left(\vert x(t)-X(t)\vert ^{p}\bigg\vert\mathcal{F}_{t_k}\right)&\le \vert x(t_k)-X_k\vert ^{p}+C\mathbb{E}\left(\int_{t_k}^t  \left(\vert x(u)-X(u)\vert ^{p}+\vert x([u])-X([u])\vert ^{p}\right){\rm d}u\bigg\vert\mathcal{F}_{t_k}\right)\\
&\quad+C\Delta_k^{p}\int_{t_k}^t\Big(1+\vert X_k\vert^{\bar{p}}+\vert X_{m_k}\vert^{\bar{p}}\Big){\rm d}u,~a.s.
\end{align*}
Taking expectations on both sides, using the Tower property of conditional expectations, one can get that 
\begin{align}\label{x(t)-X(t)^p}
\mathbb{E}\vert x(t)-X(t)\vert^p\le& \mathbb{E}\vert x(t_k)-X_k\vert^p+C\mathbb{E}\int_{t_k}^{t} (\vert x(u)-X(u)\vert^p+\vert x([u])-X([u])\vert^p){\rm d}u\notag\\
&+C\mathbb{E}\left\{\Delta_k^{p}\int_{t_k}^{t}\Big(1+\vert X_k\vert^{\bar{p}}+\vert X_{m_k}\vert^{\bar{p}}\Big){\rm d}u\right\}.
\end{align}
In particularly, it follows from the continuity that
\begin{align}\label{X_k-X_k^p}
\mathbb{E}\vert x(t_{k+1})-X(t_{k+1})\vert^p\le& \mathbb{E}\vert x(t_k)-X_k\vert^p+C\mathbb{E}\int_{t_k}^{t_{k+1}} (\vert x(u)-X(u)\vert^p+\vert x([u])-X([u])\vert^p){\rm d}u\notag\\
&+C\mathbb{E}\left\{\Delta_k^{p}\int_{t_k}^{t_{k+1}}\Big(1+\vert X_k\vert^{\bar{p}}+\vert X_{m_k}\vert^{\bar{p}}\Big){\rm d}u\right\}.
\end{align}
Consequently, recall that $x(0)=X_0=x_0$ and $\Delta_k\le \frac{T}{M}$ for all $k\ge 0$, it can be obtained from \eqref{x(t)-X(t)^p} and \eqref{X_k-X_k^p} by iteration that
\begin{align*}
\mathbb{E}\vert x(t)-X(t)\vert^p\le& C\mathbb{E}\int_{0}^{t} (\vert x(u)-X(u)\vert^p+\vert x([u])-X([u])\vert^p){\rm d}u\\
&+C\mathbb{E}\left\{\left(\frac{T}{M}\right)^p\int_{0}^{t}(1+\vert X(\underline{t})\vert^{\bar{p}}+\vert X([\underline{t}])\vert^{\bar{p}}){\rm d}u\right\}.
\end{align*}
Then one can easily know form Lemma \ref{numerical solu. bounded} that
\begin{align*}
\mathbb{E}\vert x(t)-X(t)\vert^p\le& C\mathbb{E}\int_{0}^{t} (\vert x(u)-X(u)\vert^p+\vert x([u])-X([u])\vert^p){\rm d}u+CM^{-p},
\end{align*}
Then it follows from the Fubini theorem that
\begin{align*}
\sup_{0\le u\le t}\mathbb{E}\vert x(u)-X(u)\vert^p\le& C\int_{0}^{t} \sup_{0\le v\le u}\mathbb{E}\vert x(v)-X(v)\vert^p{\rm d}u+CM^{-p}.
\end{align*}
According to the Gronwall inequality and Lemma \ref{N_T}, we have
\begin{align*}
\sup_{0\le t\le T}\mathbb{E} \vert x(t)-X(t)\vert ^{p}\le CM^{-p}e^{CT}\le C\mathbb{E}(N_T+1)^{-p}.
\end{align*}
The proof is completed.
\end{proof}

\section{Numerical examples}\label{examples}
This section presents a couple of examples, they are used as illustrations of the performance of the adaptive Milstein  scheme in different situations. Example \ref{example_1} considers an SDEPCA driven by 1-dimensional Brownian motion which is an analogous equation to the stochastic Ginzburg-Landau equation, while the Brownian motions in Examples \ref{example_2} and \ref{example_3} are both 2-dimensional, whereas the diffusion terms in Example \ref{example_2} satisfy the commutativity condition and those in Example \ref{example_3} do not.
\begin{example}\label{example_1}\rm
Consider the following scalar SDEPCA
\begin{align}\label{example_1_equ.}
{\rm d}x(t)=\big(-x^3(t)+x([t])\big){\rm d}t+(x(t)+ x([t])){\rm d}B(t),~t\in[0,2]
\end{align}
with the initial value $x(0)=2$. 

The numerical solution using the backward Milstein method with uniform step-size $\Delta=2^{-13}$ is taken as the ``exact solution”, we test the convergence of the adaptive Euler, adaptive Milstein and tamed Milstein schemes. The step function in the adaptive method is taken as $\Delta(x)=\frac{2}{1+x^2}$, let $X_k$ denote the numerical approximation of $x(t_k)$, where $t_{k+1}=t_0+\sum_{i=0}^{k}\Delta_i$ with $t_0=0$ and 
\begin{align*}
\Delta_i=\min\left\{\frac{2}{M(1+X_i^2)},[t_i]+1-t_i\right\},
\end{align*}
We take $M=2^3, 2^4, 2^5, 2^6$ and $2^7$ and generate 5000 sample paths for each numerical method. The drift term in the tamed Milstein method is taken to be 
\begin{align*}
f_{\Delta}(x,y)=\frac{f(x,y)}{1+\Delta(|x|^2+|y|^2)}
\end{align*}
with $\Delta=2^{-4},2^{-5},2^{-6},2^{-7}$ and $2^{-8}$. Figure \ref{fig_1} shows the the mean square(MS) error at the terminal time plotted against the number of timesteps. As can be seen from Figure \ref{fig_1}, the convergence order of the adaptive Euler method is about 1/2, while the convergence order of the adaptive Milstein method and the tamed Milstein method can reach 1. Moreover, when the number of steps is the same, the adaptive Milstein method performs better than the tamed Milstein method.
\begin{figure}[htb] \label{fig_1}
	\centering
	\includegraphics[width=4in]{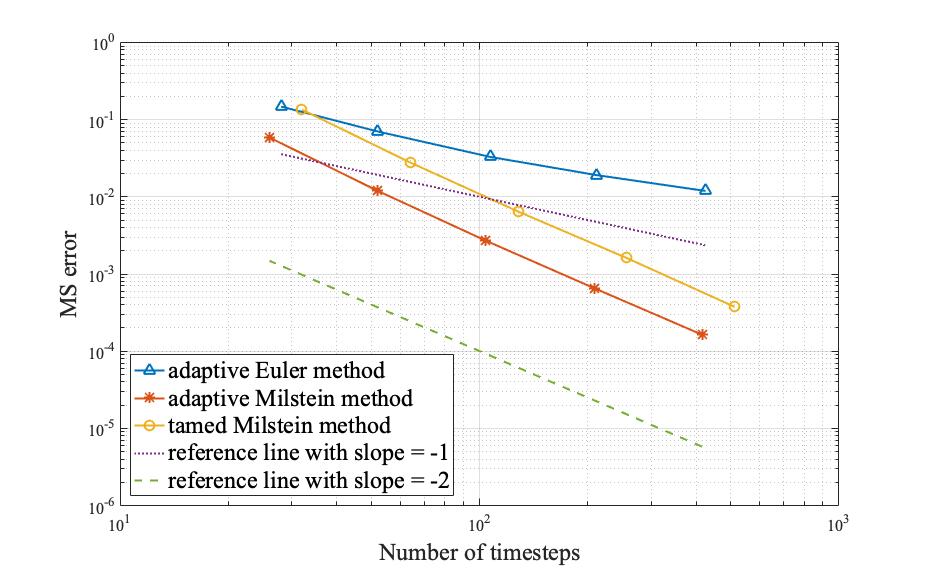}
	\renewcommand{\figurename}{Figure}
	\caption{Loglog plot of mean square errors against timesteps for Example. \ref{example_1}.}
\end{figure}
\end{example}

\begin{example}\label{example_2}\rm
Consider the SDEPCA
\begin{align}\label{example_2_equ.}
{\rm d}x(t)=f(x(t),x([t])){\rm d}t+g(x(t),x([t])){\rm d}B(t),~t\in[0,2]
\end{align}
with the initial value $x(0)=2$, where 
\begin{align*}
f(x,y)=-x^3+x+y,\quad g(x,y)=(5x+y,0.5x+0.1y),
\end{align*}
$B(t)=(B^1(t),B^2(t))^{\rm T}$ is a 2-dimensional Brownian motion. It is easy to see that $f$ and $g$ satisfy all the assumptions in this paper, and
\begin{align*}
D_xg_1(x,y)g_2(x,y)=D_xg_2(x,y)g_1(x,y)=2.5x+0.5y.
\end{align*}
The step function is also taken to be $\Delta(x)=\frac{2}{1+x^2}$, and 
\begin{align*}
\Delta_i=\min\left\{\frac{\Delta(X_i)}{M},[t_i]+1-t_i\right\},
\end{align*}
we take $M=2^6,2^7,2^8$ and $2^9$ in this example. The MS errors of the adaptive Euler, adaptive Milstein and tamed Milstein methods are plotted in Figure \ref{fig_2}.

\begin{figure}[htb] \label{fig_2}
	\centering
	\includegraphics[width=4in]{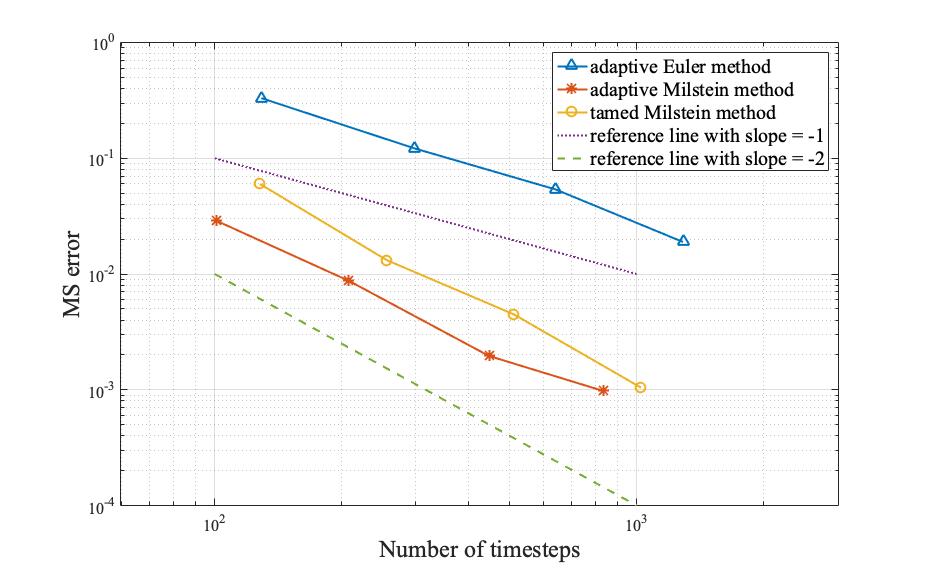}
	\renewcommand{\figurename}{Figure}
	\caption{Loglog plot of errors against step sizes for Example. \ref{example_2}.}
\end{figure}
\end{example}

\begin{example}\label{example_3}\rm
Consider the SDEPCA
\begin{align}\label{example_3_equ.}
{\rm d}x(t)=f(x(t),x([t])){\rm d}t+g(x(t),x([t])){\rm d}B(t),~t\in[0,2]
\end{align}
with the initial value $x(0)=2$, where $B(t)=(B^1(t),B^2(t))^{\rm T}$, 
\begin{align*}
f(x,y)=-x^3+sin(y),\quad g(x,y)=(x+y,-x+y),
\end{align*}
It is easy to see that 
\begin{align*}
D_xg_1(x,y)g_2(x,y)=-x+y,
\end{align*}
while
\begin{align*}
D_xg_2(x,y)g_1(x,y)=-x-y,
\end{align*}
hence the noise is not commutative.

For the SDEPCA with non-commutative noise in this example, we again solve it numerically using the adaptive Euler method, the adaptive Milstein method and the tamed Milstein method. The step function is the same as the previous two examples with $M=2^2,2^3,2^4,2^5$. We use the numerical solution of the backward Euler method with uniform step-size $\Delta=2^{-12}$ as the ``exact solution”. A comparison of the convergence of the three methods is given in Figure \ref{fig_3}.
\begin{figure}[htb] \label{fig_3}
	\centering
	\includegraphics[width=4in]{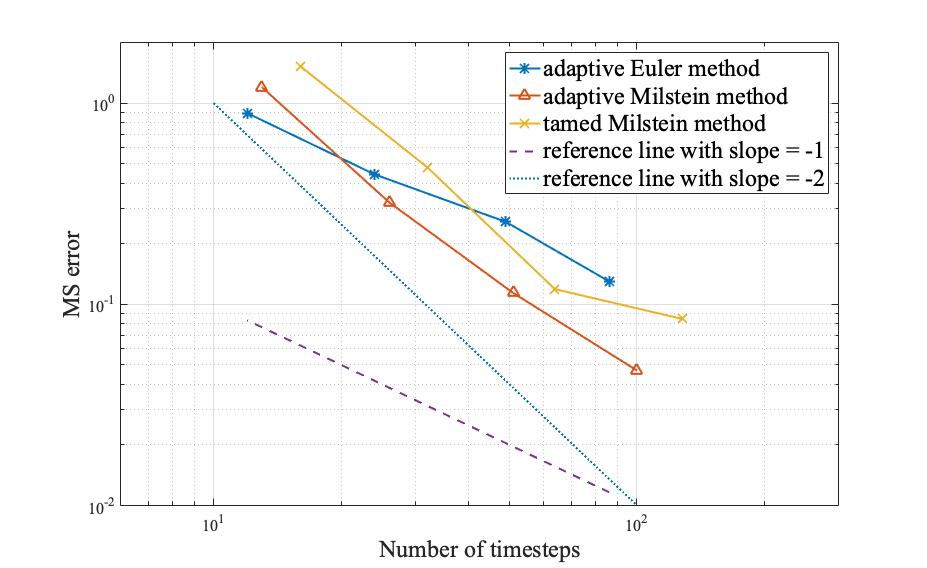}
	\renewcommand{\figurename}{Figure}
	\caption{Loglog plot of mean square errors against timesteps for Example \ref{example_3}.}
\end{figure}
\end{example}
\section*{Declarations} 
\subsection*{Ethical Approval}
Not Applicable.
\subsection*{Availability of supporting data}
Data sharing not applicable to this article as no datasets were generated or analysed during the current study.
\subsection*{Competing interests}
 All authors certify that they have no affiliations with or involvement in any organization or entity with any financial interest or non-financial interest in the subject matter or materials discussed in this manuscript.
\subsection*{Funding}
 This work was Supported by the Postdoctoral Fellowship Program of CPSF under Grant Number GZC20242217, the China Postdoctoral Science Foundation under Grant Number 2024M754160 and the National Natural Science Foundation of China under Grant Numbers 12471372 and 12071101.
 \subsection*{Authors' contributions}
Yuhang Zhang drafted the manuscript and all the authors revised the manuscript together.


\bibliography{sn-bibliography}

\end{document}